\definecolor{green}{rgb}{0,0.5,0}
\def\d{\mathrm{d}}
\def\p{\partial }
\def\D{\mathrm{D}}
\def\e{\epsilon}
\def\Z{\mathbb{Z}}
\def\N{\mathbb{N}}
\def\R{\mathbb{R}}
\newtheorem{theorem}{Theorem}[section]
\newtheorem{corollary}[theorem]{Corollary}
\newtheorem{lemma}[theorem]{Lemma}
\newtheorem{prop}[theorem]{Proposition}
\theoremstyle{definition}
\newtheorem{remark}[theorem]{Remark}
\newtheorem{example}[theorem]{Example}
\newtheorem{definition}[theorem]{Definition}
\newtheorem{observation}{Observation}[section]
\numberwithin{equation}{section}
\begin{document}

\title[Hamiltonian bvps, conformal symplectic symmetries, conjugate loci]{Hamiltonian boundary value problems, conformal symplectic symmetries, and conjugate loci}
\author[R. McLachlan]{Robert I McLachlan}
\author[C. Offen]{Christian Offen}
\keywords{Hamiltonian boundary value problems; singularities; conformal symplectic geometry; catastrophe theory; conjugate loci.}
\subjclass[2010]{37J20}

\begin{abstract}
In this paper we continue our study of bifurcations of solutions of boundary-value problems for symplectic maps arising as Hamiltonian diffeomorphisms. These have been shown to be connected to catastrophe theory via generating functions and ordinary and reversal phase space symmetries have been considered. Here we present a convenient, coordinate free framework to analyse separated Lagrangian boundary value problems which include classical Dirichlet, Neumann and Robin boundary value problems. The framework is then used to {prove the existence of obstructions arising from} conformal symplectic symmetries {on the bifurcation behaviour of solutions to Hamiltonian boundary value problems.}
Under non-degeneracy conditions, {a group action by conformal symplectic symmetries has the effect that the flow map cannot degenerate in a direction which is tangential to the action. This imposes restrictions on which singularities can occur in boundary value problems.} {Our results generalise} classical results about conjugate loci on Riemannian manifolds {to a large class of Hamiltonian boundary value problems with, for example, scaling symmetries}.
\end{abstract}

\maketitle

\section{Introduction}\label{sec:introductionsection}

In the most frequently studied situation of a group acting on a symplectic manifold, the group acts by symplectic or Hamiltonian actions and leaves a Hamiltonian flow invariant.
In another case \cite{ConfHamSys}, the group acts by Hamiltonian actions but the flow is conformal symplectic. In contrast, in this paper we consider conformal symplectic actions on Hamiltonian flows and their effects on bifurcations in boundary value problems.

As an example of a Hamiltonian boundary value problem let us consider the conjugate points problem for geodesics on Riemannian manifolds. Recall that geodesics are locally length minimising. Roughly speaking, two points $q$ and $Q$ are conjugate if a geodesic starting at $q$ stops to be length minimising when reaching $Q$. More precisely, two points $q$ and $Q$ on a Riemannian manifold connected by a geodesic $\gamma$ are called \textit{conjugate points} if there exists a non-trivial Jacobi vector field along $\gamma$ vanishing at $q$ and $Q$. In other words, there exists a non-trivial vector field along $\gamma$ which arises as a variational vector field for variations through geodesics fixing $q$ and $Q$. \cite[Ch.5]{doCarmo} 
{ {Alternatively one can define $q$ and $Q$ to be conjugate if $Q$ is the image of a critical point of the geodesic exponential map at $q$}} \cite{WallGenericManifolds}.
The set of all points conjugate to $q$ is called the \textit{conjugate locus to $q$}. 
The left plot of figure \ref{fig:EllipsoidConjPts} shows the graph of a (slightly perturbed) 2-dimensional Gaussian and the conjugate locus to the point $q$ marked as $\ast$ in the plot. There are three geodesics connecting $q$ with a point $Q$ in between the solid black lines where $Q_1>0$ while there is a unique geodesic if $Q$ is outside that region.
Keeping $q$ fixed and varying $Q$ two of the geodesics merge in a fold bifurcation as $Q$ crosses one of the solid black lines. If $Q$ crosses the meeting point of the lines of folds all connecting geodesics merge into one. The meeting point corresponds to a cusp singularity.

Other examples of conjugate points are antipodal points on a sphere. Let us fix the point $q=(q^1,q^2,q^3)=(-\frac 1 \pi, 0,0)$ on a 2-sphere in $\R^3$ of circumference 2. Its only conjugate point is the anti-podal point $Q=(\frac 1 \pi, 0,0)$. Perturbing the sphere to an ellipsoid the conjugate locus consists of four cusps connected by lines of fold singularities as seen in the plot to the right of figure \ref{fig:EllipsoidConjPts}, unless $q$ happens to be an umbilic point of the ellipsoid. This is known as the last geometric statement of Jacobi \cite{Itoh2004}.

\begin{figure}
\begin{center}
\includegraphics[width=0.6\textwidth]{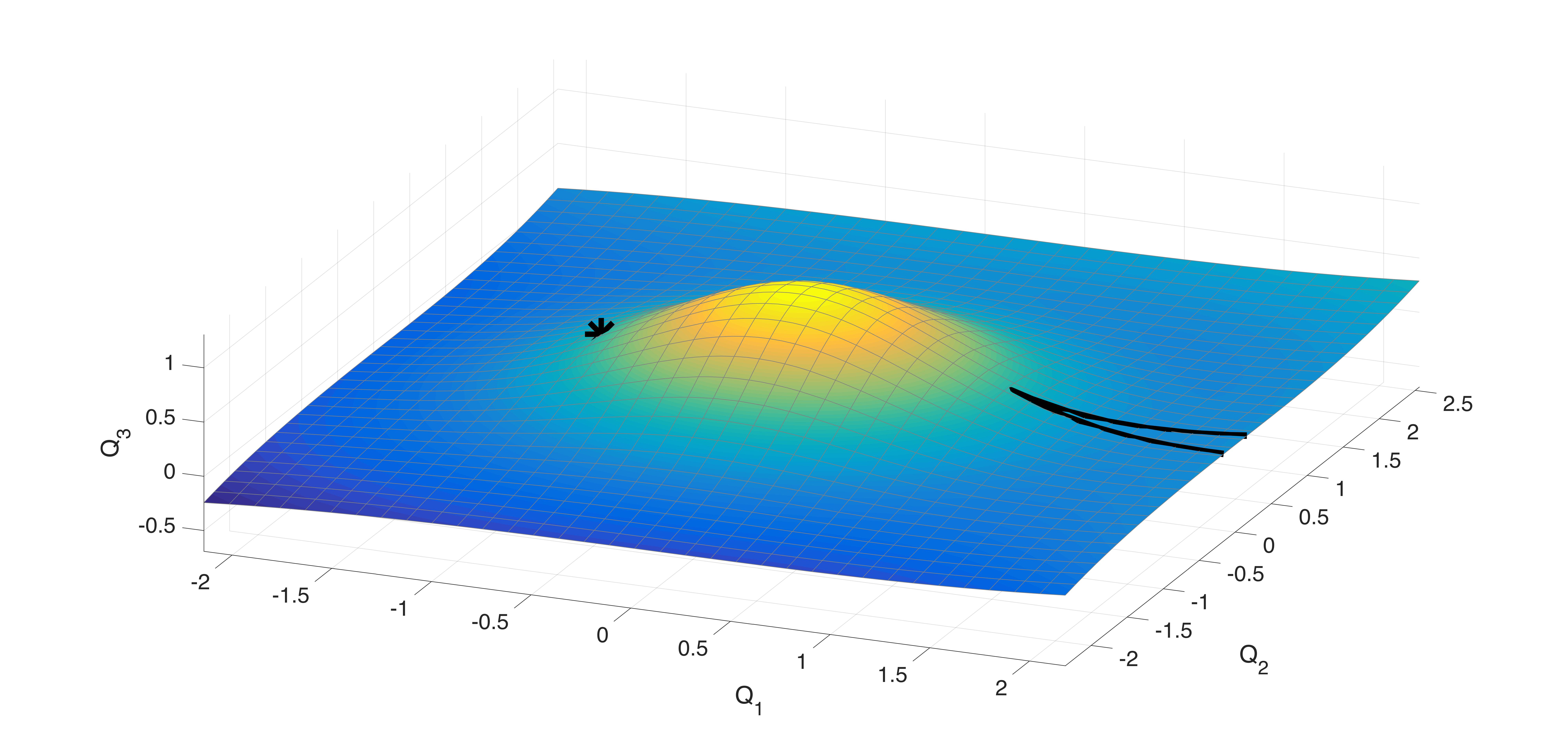}
\includegraphics[width=0.39\textwidth]{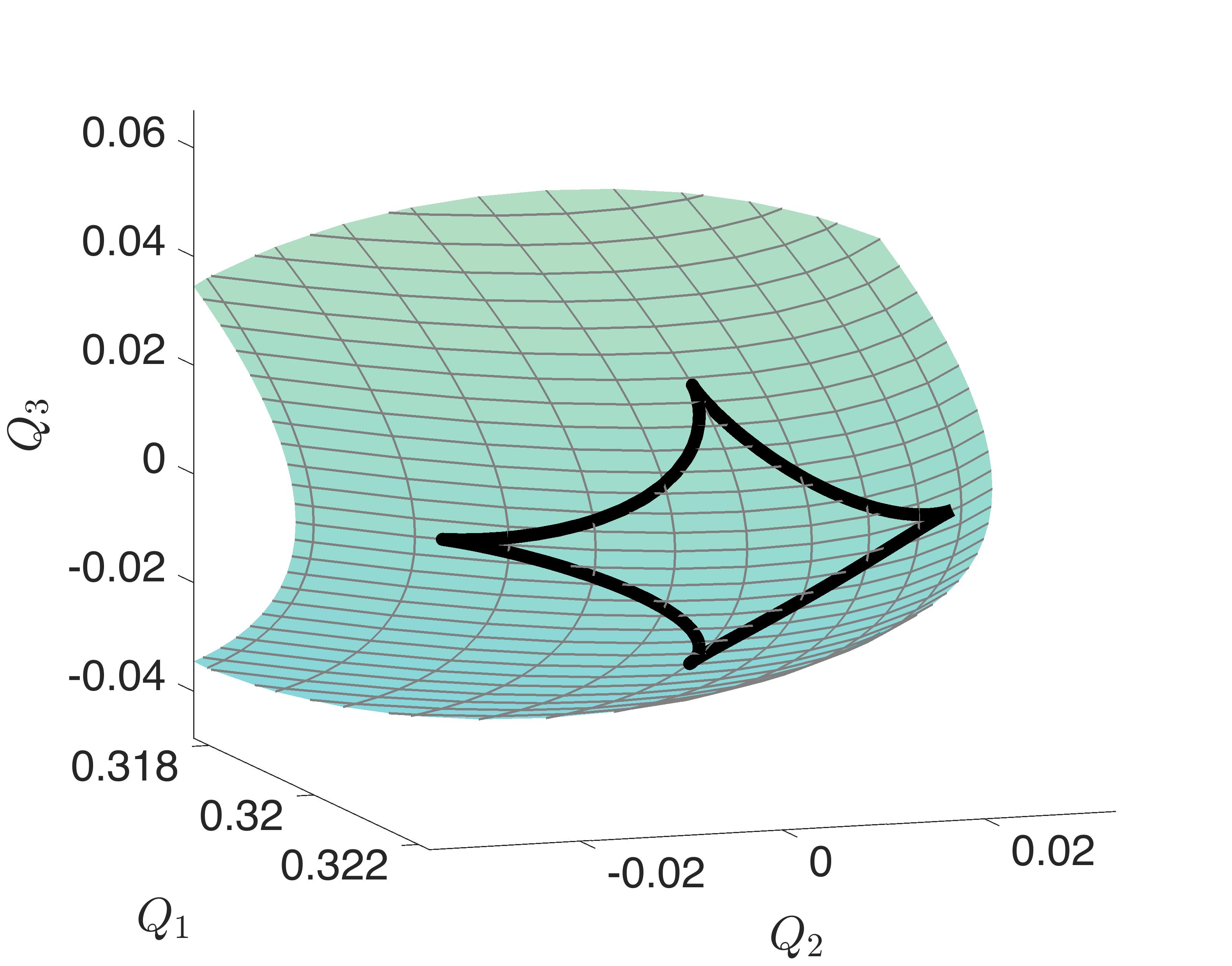}
\end{center}
\caption{Conjugate loci on a perturbed graph of a 2-dimensional Gaussian and on an ellipsoid.}\label{fig:EllipsoidConjPts}
\end{figure}

In general, on a Riemannian manifold $(N,g)$ geodesic motions can be interpreted as motions of a Hamiltonian system defined on the cotangent bundle $\pi \colon T^\ast N \to N$ equipped with the standard symplectic form: for the Hamiltonian
\begin{equation}\label{eq:HamGeoCordsfree}
H(\alpha) = \frac 12 \alpha\left(g^{-1}(\alpha)\right), \quad \alpha \in T^\ast N
\end{equation}
motions correspond to velocity vector fields of geodesics under the bundle isomorphism $g$ between the tangent bundle $TN$ and the cotangent bundle $T^\ast N$. If $\Phi$ is the associated Hamiltonian flow at time 1 then the problem of connecting two points $q,Q \in N$ by a geodesic can be formulated as the following boundary value problem for $\Phi$: find $\alpha \in T^\ast N$ such that
\begin{equation}\label{eq:bdproblHGeodesic}
\pi (\alpha) =q, \quad (\pi \circ \Phi)(\alpha)=Q.
\end{equation}
The Hamiltonian formulation reveals symplectic structure hidden in the geodesic problem: the above problem \eqref{eq:bdproblHGeodesic} is a boundary value problem for a symplectic map.
Indeed, the symplectic structure {has an effect on the kind of bifurcations} which can occur in boundary value problems {and whether the singularities persist under small perturbations}.
Moreover, the Hamiltonian $H$ \eqref{eq:HamGeoCordsfree} has a conformal symplectic symmetry under which $\Phi$ is invariant. In contrast to boundary value problems for arbitrary symplectic maps, the Dirichlet-type form of the boundary condition and the symmetry properties turn out to impose restrictions on which of these singularities actually can occur.

The remainder of the paper is structured as follows.
In section \ref{sec:DirichletExtra} we introduce the notion of \textit{Lagrangian boundary value problems} and recall some results from \cite{bifurHampaper} about their bifurcation behaviour which is connected to catastrophe theory.
A coordinate-free framework is provided to analyse \textit{separated Lagrangian boundary value problems} which include classical Dirichlet, Neumann and Robin boundary conditions and, in particular, our motivational example.

In section \ref{sec:ObstrConfSym} the framework is used to prove the main result of this paper: if a Hamiltonian is invariant under a conformal symplectic action of a $k$-dimensional Lie group then the degree of degeneracy of a singularity in a separated Lagrangian boundary value problem cannot exceed $n-k$, where $2n$ is the phase space dimension, if the action is tangential to the boundary condition and under non-degeneracy conditions on the group action and the Hamiltonian vector field.
This extends the results of \cite[Section 3.2]{bifurHampaper} where the authors prove that separated Lagrangian boundary value problems can only degenerate up to dimension $n$ and that this is the lowest upper bound one can achieve in a general setting.
The new result applies in particular to homogeneous Hamiltonians as \eqref{eq:HamGeoCordsfree}.
This provides an alternative approach to the conjugate-points problem on Riemannian manifolds and recovers classical results about conjugate loci \cite{WallGenericManifolds, CutLocusWeinstein}.




\section{Lagrangian boundary value problems}\label{sec:DirichletExtra}\label{subsec:introDiri}

\subsection{Definitions and the connection to catastrophe theory}

\begin{definition}[Lagrangian boundary value problem for a symplectic map]\label{def:LagBVP}
Consider a symplectic map $\phi\colon (M,\omega) \to ( M' ,  \omega')$ and projections $\pi \colon M\times  M' \to M$ and $ \pi' \colon M\times  M \to  M'$. Define the symplectic form $\omega \oplus (- \omega'):= \pi^\ast\omega - {\pi'}^\ast \omega'$ on the manifold $M \times  M'$. 
The graph $\Gamma$ of $\phi$ constitutes a Lagrangian submanifold in the symplectic manifold $(M \times  M', \omega \oplus (- \omega'))$. 
If $\Pi$ is another Lagrangian submanifold in $(M \times  M', \omega \oplus (-\omega'))$, then $(\phi,\Pi)$ is called \textit{Lagrangian boundary value problem (for $\phi$)}. Its solution is the intersection of $\Gamma$ with $\Pi$.
\end{definition}


\begin{remark}
Lagrangian boundary value problems $(\phi, \Pi)$ with $\Gamma=\mathrm{graph}\, \phi$ can be localized near a solution $z \in \Pi\cap \Gamma$: shrink $M$ to an {open} neighbourhood of $\pi(z)$, $ M'$ to an {open} neighbourhood of $ \pi'(z)$ and restrict $\omega$, $ \omega'$ and $\phi$ accordingly.
\end{remark}

\begin{example} If $(M,\omega)=( M',  \omega')$ than the periodic boundary value problem $\phi(z) = z$ is a Lagrangian boundary value problem where $\Pi =\{(m,m)\, |\, m \in M\}$ is the diagonal.
\end{example}


\begin{example}\label{ex:DNR} Classical Dirichlet, Neumann and Robin boundary value problems for second order ordinary differential equations of the form $\ddot u(t) = \nabla_u G(t,u(t))$ can be regarded as Lagrangian boundary value problems 
{
for the time $\tau=t_1-t_0$-flow map of the Hamiltonian system $H(t,x,y)=\frac 12 \|y\|^2 - G(t,x)$ as illustrated in figure }\ref{fig:DNRFish}{: a motion solves }

\begin{itemize}
\item
{the Dirichlet problem $u(t_0)=x^\ast$, $u(t_1)=X^\ast$ if it starts on the line $x^\ast\times \R$ at time $t_0$ and ends on $X^\ast\times \R$ at time $t_1$,}
\item
{the Neumann problem $\dot u(t_0)=y^\ast$, $\dot u(t_1)=Y^\ast$ if it starts on $\R\times y^\ast$ and ends on $\R\times Y^\ast$ or }
\item
{the Robin problem $u^j(t_0)+\alpha_0^j \dot u^j(t_0) = \beta_0^j$, $u^j(t_1)+ \alpha_1^j \dot u^j(t_1) = \beta_1^j$ if it starts on the start-line and ends on the end-line plotted in the phase portrait.}
\end{itemize}
{
We have
$\Pi=\{x^\ast\}\times \R \times \{X^\ast\} \times \R$ for the Dirichlet problem,
$\Pi=\R\times \{y^\ast\} \times \R \times \{Y^\ast\} \times \R$ for the Neumann problem
and $\Pi=\{(x,y,X,Y)\, | \, x^j+\alpha_0^j y^j = \beta_0^j,\, X^j+ \alpha_1^j Y^j = \beta_1^j\}$ for the Robin problem.
}
\end{example}


\begin{figure}
\begin{center}
\includegraphics[width=0.32\textwidth]{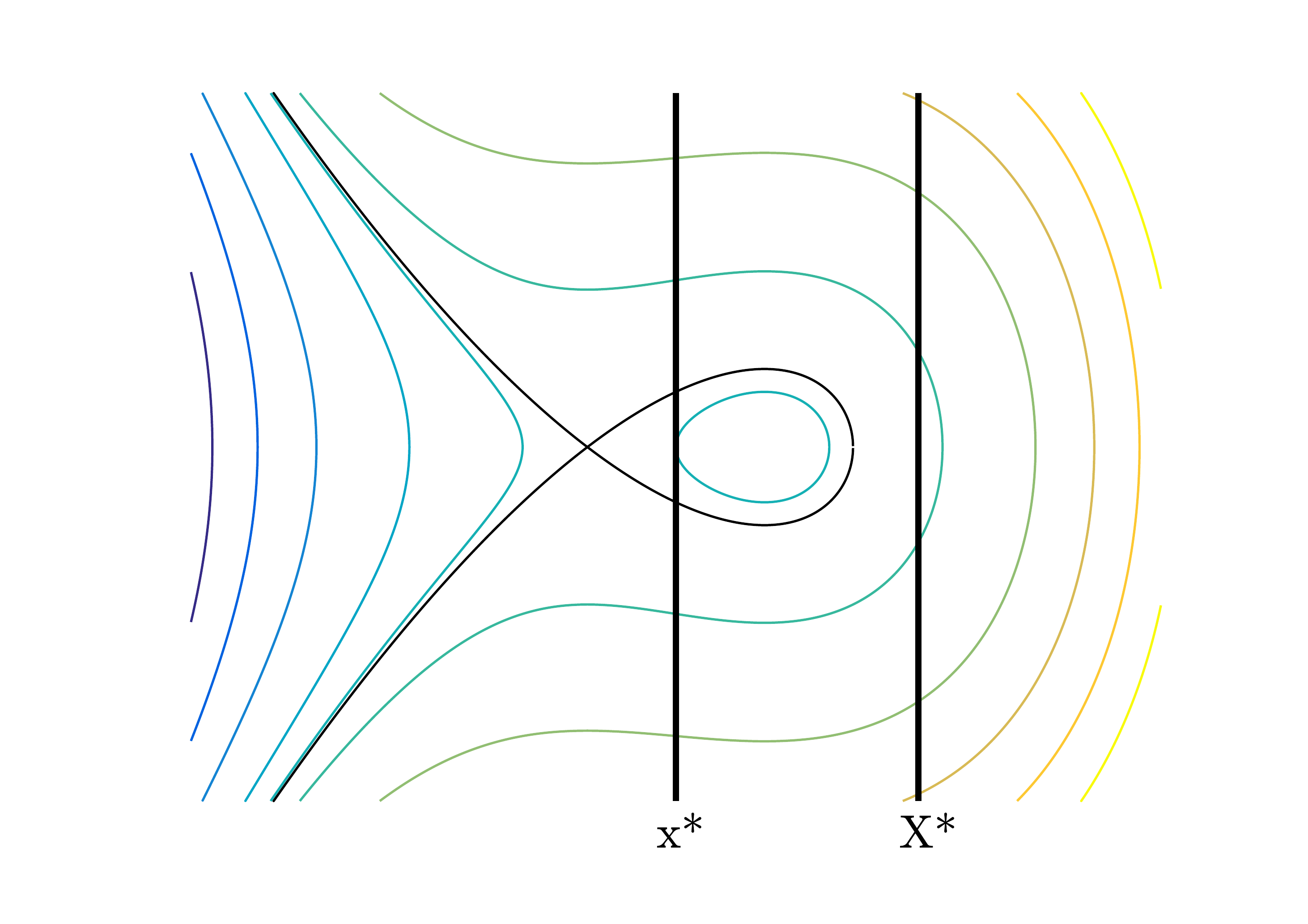}
\includegraphics[width=0.32\textwidth]{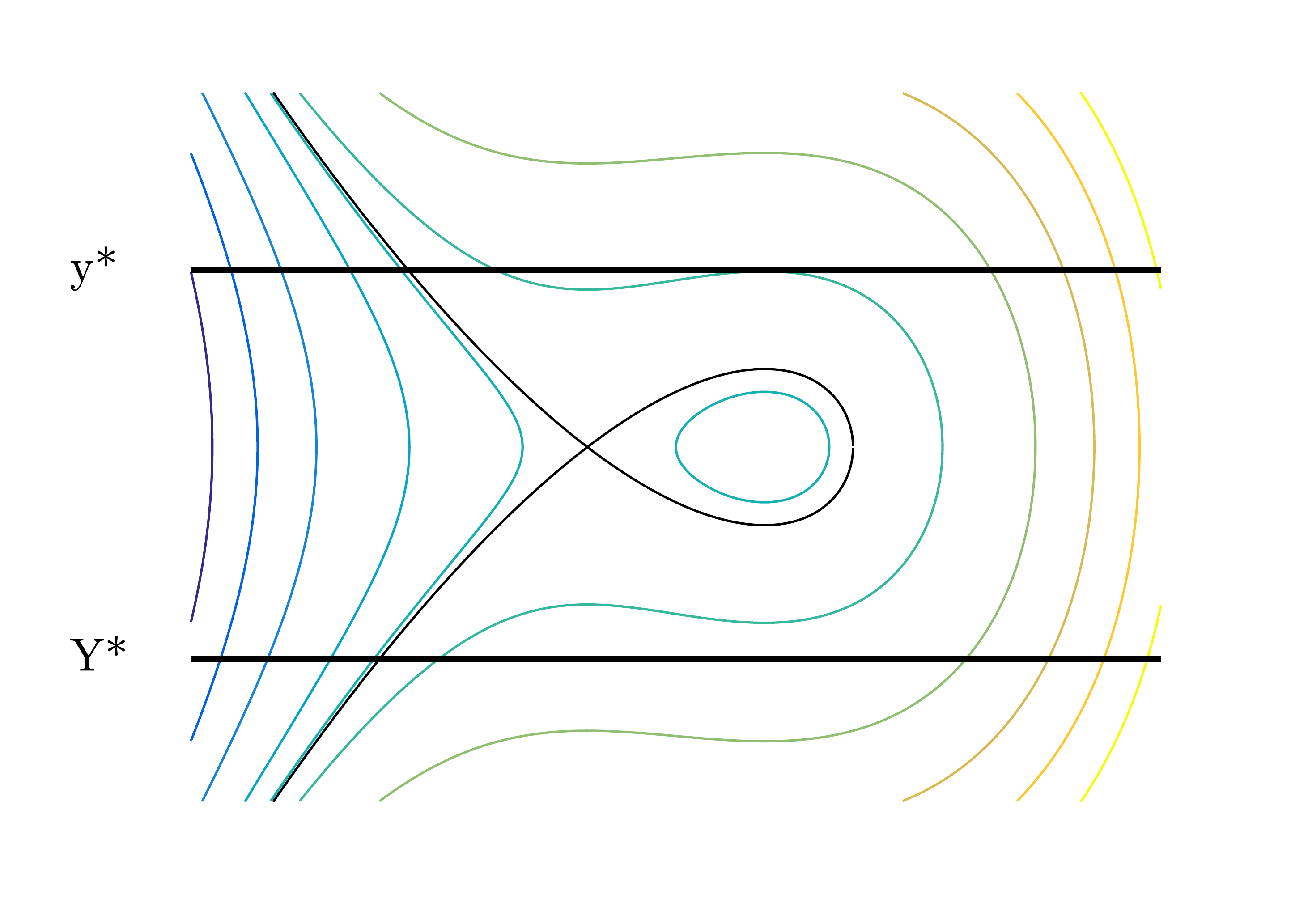}
\includegraphics[width=0.32\textwidth]{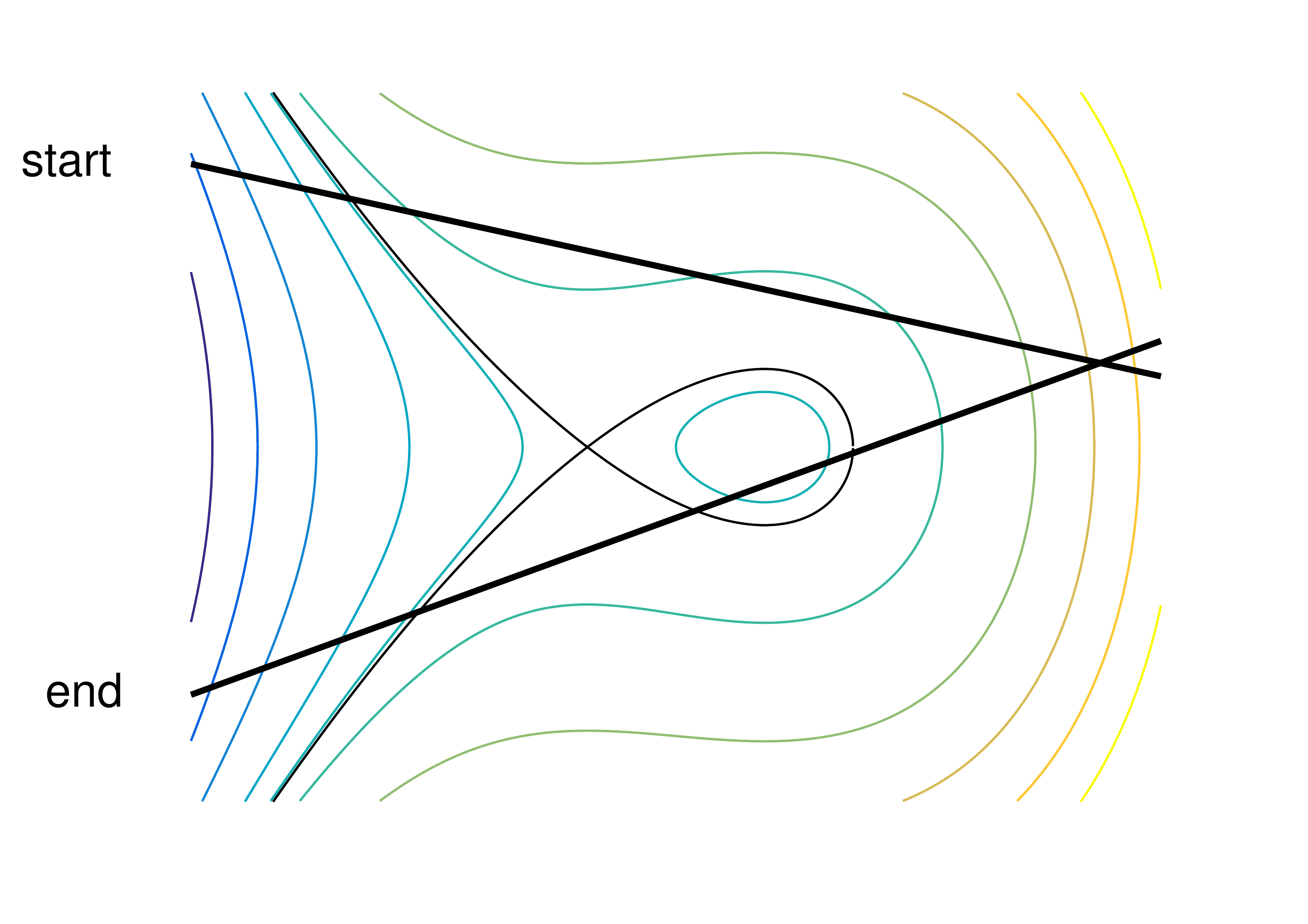}
\end{center}
\caption{Illustration of Dirichlet, Neumann and Robin boundary conditions {as Lagrangian boundary value problems. (Example \ref{ex:DNR})}}\label{fig:DNRFish}
\end{figure}

\begin{example}[Non-example] Let $\omega=\sum_{j=1}^n \d x^j \wedge \d y_j$ be the standard symplectic form on $\R^{2n}$. Consider a symplectic map $\phi \colon \R^{2n}\to \R^{2n}$. The boundary condition $\phi(x,y)=(y,x)$ does \textit{not} yield a Lagrangian boundary value problem
since $\Pi = \{(x,y,y,x)\, | \, (x,y) \in \R^{2n}\}$ is not Lagrangian in $(\R^{2n} \times \R^{2n}, \omega \oplus -\omega)$.
\end{example}


\begin{observation}[Bifurcation behaviour of Lagrangian boundary value problems]\label{obs:transl}
The intersection of two Lagrangian submanifolds $\Gamma$ and $\Pi$ arising in Lagrangian boundary value problems can locally be viewed as the intersection of two graphical Lagrangian submanifolds $\tilde \Gamma$ and $\tilde \Pi$ in a cotangent bundle over a simply connected manifold $N$. The Lagrangian submanifolds $\tilde \Gamma$ and $\tilde \Pi$ arise as images of exact 1-forms $\d f$ and $\d h$ on $N$ such that the intersection problem is equivalent to finding points in $N$ where the 1-form $\d g = \d (f-h)$ vanishes. In this way one can assign local critical-points-of-a-function problems to local Lagrangian boundary value problems (see \cite{bifurHampaper} for details). 
\end{observation}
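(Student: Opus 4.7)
The plan is to apply Weinstein's Lagrangian neighborhood theorem to reduce the intersection $\Gamma \cap \Pi$ to the zero set of an exact 1-form on a simply connected manifold. Fix a solution $z \in \Gamma \cap \Pi$. A standard open-and-dense argument in the Lagrangian Grassmannian of $T_z(M \times M')$ produces a Lagrangian subspace $V \subset T_z(M \times M')$ transverse simultaneously to $T_z\Gamma$ and to $T_z\Pi$, together with a second Lagrangian subspace $W$ transverse to each of $V$, $T_z\Gamma$, and $T_z\Pi$. Integrating $V$ to a Lagrangian germ through $z$ and shrinking, I obtain a Lagrangian submanifold $N$ through $z$ diffeomorphic to an open disk, hence simply connected.

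Next I would invoke Weinstein's theorem to identify a tubular neighborhood of $N$ in $(M \times M', \omega \oplus (-\omega'))$ with a neighborhood of the zero section of $T^\ast N$, arranging the identification so that $N$ maps to the zero section and the cotangent fibre at $z$ is identified with $W$. Under this identification the images $\tilde\Gamma$ and $\tilde\Pi$ of $\Gamma$ and $\Pi$ are Lagrangian submanifolds of $T^\ast N$ whose tangent spaces at $0_z$ are transverse to the fibre. After further shrinking they are graphs of closed 1-forms $\alpha$, $\beta$ on $N$, and the Poincar\'e lemma applied to the simply connected $N$ furnishes primitives $\alpha = \d f$ and $\beta = \d h$.

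The last step is a tautology on the level of graphs: a point $(q, \eta) \in T^\ast N$ lies in $\tilde\Gamma \cap \tilde\Pi$ if and only if $\eta = \d f(q) = \d h(q)$, i.e., if and only if $\d g(q) = 0$ for $g := f - h$. Hence the local intersection problem is equivalent to the critical-point problem for $g$ on $N$, which is precisely the catastrophe-theoretic setting alluded to in the statement.

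The main obstacle I expect is the first step: producing the auxiliary Lagrangians $V$ and $W$ with the required simultaneous transversalities, and verifying that the freedom in Weinstein's theorem (the choice of Lagrangian complement along $N$) genuinely lets one prescribe the fibre at $z$ to be $W$. Once these technicalities are handled, the characterization of graphical Lagrangian sections of $T^\ast N$ as images of closed 1-forms, together with the Poincar\'e lemma, immediately yields the exact primitives $f$ and $h$, and the identification of intersections with critical points of $f-h$ is automatic.
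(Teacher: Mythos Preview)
Your argument is correct. The paper does not actually prove this observation---it states it and defers to \cite{bifurHampaper}---so there is no in-paper proof to compare against line by line, but the natural construction (and the one most likely intended) is simpler than yours: take $N$ to be $\Pi$ itself, shrunk to a contractible neighbourhood of $z$. Weinstein's theorem then identifies $\Pi$ with the zero section, so $h$ may be taken constant, and only $\Gamma$ must be made graphical. For that one merely needs a Lagrangian complement to $T\Pi$ along $\Pi$ which is transverse to $T_z\Gamma$, and the same open--dense argument in the Lagrangian Grassmannian supplies it. This eliminates your auxiliary subspace $V$ altogether and halves the transversality demands on $W$. Your symmetric setup---with both $\tilde\Gamma$ and $\tilde\Pi$ realised as nontrivial graphs over a third Lagrangian---does match the literal phrasing of the observation (which treats $f$ and $h$ on equal footing) and is not wrong, just slightly more involved.

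The obstacles you flag at the end are not genuine. Finite intersections of open dense subsets of the Lagrangian Grassmannian remain open and dense, so Lagrangian subspaces $V$ and $W$ with the required simultaneous transversalities certainly exist. The freedom to prescribe the fibre direction in Weinstein's theorem is precisely the Darboux--Weinstein theorem the paper itself later invokes as \cite[Thm.~7.1]{WEINSTEIN1971329}: extend $W$ to a smooth Lagrangian distribution along $N$ complementary to $TN$, and the theorem produces a symplectomorphism carrying this distribution to the vertical polarisation of $T^\ast N$.
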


\begin{remark}\label{rem:recapCatastropheTheory}
Parameter-dependent changes of critical points in smooth families of smooth maps
\begin{equation}\label{eq:smoothfamily}
g_\mu\colon \R^k \to \R, \quad z \mapsto g_\mu(z), \quad k \in \N \cup\{0\},
\end{equation}
are called \textit{critical-points-of-a-function} problems or \textit{gradient-zero} problems. They have been treated under the headline \textit{catastrophe theory}. {The parameter $\mu$ can be multi-dimensional.} Roughly speaking, two families localised around a point of interest $z$ and a parameter value $\mu$ are (stably right-left) equivalent if they coincide up to {addition of quadratic forms,} parameter-dependent smooth changes of coordinates on the domain and target space and reparametrisations. Equivalent families show qualitatively the same bifurcation behaviour near $z$ and $\mu$. Generic, local bifurcation phenomena have been described and classified \cite{Arnold1,Gilmore1993catastrophe,lu1976singularity}.
\end{remark}

\begin{definition}[Fully reduced families, degeneracy of singularities] Consider a smooth family $g_\mu$ of smooth maps as in \eqref{eq:smoothfamily} localised at a point $z$ and a parameter value $\mu$. The dimension of the kernel of the Hessian matrix of $g_\mu$ at $z$ is called the \textit{degeneracy of the singularity}. If the number of variables $k$ coincides with the degeneracy of the singularity then the family is called \textit{fully reduced}.
\end{definition}

\begin{remark}
The case $k=0$ in \eqref{eq:smoothfamily} corresponds to non-singular points. This means a fully reduced family without a singular point is a family of nullary functions. Moreover, the count of parameters can be zero such that the notions mentioned above can also be applied to maps.
\end{remark}

 {We can now transfer the classification of singularities from the catastrophe theory setting to Lagrangian boundary value problems.}

\begin{definition}[Singularity of Lagrangian boundary value problems]\label{def:SingBVP}
{A Lagrangian boundary value problem has a singularity of type $X$ if the generating function $g$ obtained via observation \ref{obs:transl} has a singularity of type $X$ in the sense of catastrophe theory using right (or right-left) equivalence.}
\end{definition}

\begin{remark}\label{rem:REquivalenceJustification}
{We refer to \cite{golubitsky1975contact} for a justification why right-equivalence of generating functions of Lagrangian manifolds corresponds to a sensible geometric notion of contact equivalence for Lagrangian intersection problems. Using the coarser notion of right-left equivalence is sufficient for the viewpoint of this paper such that we do not need to specify either the graph or the boundary manifold as a reference manifold to describe the type of contact.}
\end{remark}

Some boundary conditions impose restrictions on which singularities can occur. 
Given the significance of Dirichlet, Neumann and Robin boundary value problems in applications, let us analyse the bifurcation behaviour in a problem class which we refer to as \textit{separated Lagrangian boundary value problems}.

\subsection{Definition and observations for separated Lagrangian boundary value problems}





\begin{definition}[Separated (Lagrangian) boundary value problem]\label{def:LagDirproblem}
Let $(M,\omega)$ and $(M',\omega')$ be two symplectic manifolds with $\dim M = 2n = \dim M'$. Consider a symplectic map $\phi \colon M \to M'$ and $n$-dimensional submanifolds $\Lambda \subset M$ and $\Lambda' \subset M'$. The collection $(\phi, \Lambda,\Lambda')$ is called a \textit{separated boundary value problem}. Its solution is given as
\[
\{ z \in \Lambda \, | \, \phi(z) \in \Lambda'\} = \phi^{-1}(\Lambda') \cap \Lambda.
\]
If $\Lambda \subset M$ and $\Lambda' \subset M'$ are Lagrangian submanifolds then $(\phi, \Lambda,\Lambda')$ is called a \textit{separated Lagrangian boundary value problem}.
\end{definition}

\begin{remark} Let $(M,\omega)$ and $(M',\omega')$ be $2n$-dimensional symplectic manifolds and $\Lambda \subset M$ and $\Lambda' \subset M'$ be $n$-dimensional submanifolds. The submanifold $\Pi=\Lambda \times \Lambda'  \subset (M \times M', \omega \oplus -\omega')$ is Lagrangian if and only if $\Lambda$ and $\Lambda'$ are Lagrangian submanifolds. Therefore, the separated boundary value problems which are Lagrangian boundary value problems (definition \ref{def:LagBVP}) are exactly the separated Lagrangian boundary value problems.
\end{remark}


\begin{example}
The classical Dirichlet, Neumann and Robin boundary value problems considered in example \ref{ex:DNR} constitute separated Lagrangian boundary value problems if regarded as boundary value problems for a Hamiltonian flow map (figure \ref{fig:DNRFish}).
Periodic boundary conditions, however, constitute Lagrangian boundary value problems which cannot be regarded as separated boundary value problems. This leads to a different bifurcation behaviour as we will see in corollary \ref{cor:dimrestr} and proposition \ref{prop:nobifurrestrperiod}.
\end{example}



\begin{observation}[Local coordinate description]\label{obs:localcoordsDirichlet}
All separated Lagrangian boundary value problems are locally equivalent:
by Darboux-Weinstein's theorem neighbourhoods of Lagrangian submanifolds are locally symplectomorphic to neighbourhoods of the zero section of the cotangent bundle over the submanifolds \cite[Corollary 6.2]{WEINSTEIN1971329}. Therefore, a separated Lagrangian boundary value problem $(\phi, \Lambda,\Lambda')$ for $\phi\colon (M,\omega) \to (M',\omega')$ is locally given as
\begin{equation}\label{eq:DirichletProblem}
x=x^\ast, \quad  \phi^X (x,y)=X^\ast,
\end{equation}
with local Darboux coordinates $(x,y)=(x^1,\ldots x^n, y_1,\ldots y_n)$ for $M$, $(X,Y)=(X^1,\ldots X^n, Y_1,\ldots Y_n)$ for $M'$ and $x^\ast, X^\ast \in \R^{2n}$. In \eqref{eq:DirichletProblem} the symbol $\phi^X$ is a shortcut for $X \circ \phi$. 
In particular, Dirichlet, Neumann and Robin boundary conditions can be treated on the same footing in the bifurcation context. 
\end{observation}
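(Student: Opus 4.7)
The plan is to reduce the claim to two independent applications of Weinstein's Lagrangian neighbourhood theorem, one for $(M,\omega,\Lambda)$ and one for $(M',\omega',\Lambda')$, and then to read off the boundary conditions in the resulting Darboux charts.

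\textbf{Setup.} Fix a solution $z \in \Lambda$ with $\phi(z) \in \Lambda'$ (or any base point on $\Lambda$ near which we wish to localise), and shrink $M$ to a neighbourhood of $z$ and $M'$ to a neighbourhood of $\phi(z)$ in which $\phi$ still takes values in $M'$. Since the two symplectic manifolds are independent factors, the constructions on each side do not interfere.

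\textbf{Application of Darboux-Weinstein.} By \cite[Corollary 6.2]{WEINSTEIN1971329} there exists a symplectomorphism $\Psi$ from a neighbourhood of $\Lambda$ in $(M,\omega)$ to a neighbourhood of the zero section in $(T^\ast \Lambda, \d \lambda_\Lambda)$, with $\lambda_\Lambda$ the tautological $1$-form, carrying $\Lambda$ to the zero section. Choosing local coordinates $q=(q^1,\ldots,q^n)$ on $\Lambda$ near $z$ and writing $p=(p_1,\ldots,p_n)$ for the dual fibre coordinates, one obtains canonical coordinates $(q,p)$ in a neighbourhood of $z$ in $M$ in which $\omega = \sum_j \d q^j \wedge \d p_j$ and $\Lambda = \{p = 0\}$. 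Setting $x^j := p_j + (x^\ast)^j$ and $y_j := q^j$ gives Darboux coordinates $(x,y)$ for $M$ with $\omega = \sum_j \d x^j \wedge \d y_j$ in which $\Lambda$ is the fibre $\{x = x^\ast\}$ for any prescribed $x^\ast \in \R^n$. Repeating this construction for $(M',\omega',\Lambda')$ near $\phi(z)$ produces Darboux coordinates $(X,Y)$ on $M'$ in which $\Lambda' = \{X = X^\ast\}$.

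\textbf{Reading off the boundary value problem.} Expressing $\phi\colon M \to M'$ in the coordinate systems built above and writing $\phi^X := X \circ \phi$ for its first $n$ components, the condition $z \in \Lambda$ becomes $x = x^\ast$ and the condition $\phi(z) \in \Lambda'$ becomes $\phi^X(x,y) = X^\ast$. This is precisely \eqref{eq:DirichletProblem}. The final assertion that Dirichlet, Neumann and Robin boundary value problems fit into this framework is then immediate from example \ref{ex:DNR}, where the corresponding $\Lambda$ and $\Lambda'$ were exhibited as Lagrangian leaves in $(\R^{2n}, \omega)$.

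\textbf{Main obstacle.} There is no substantive obstacle; the content is entirely Weinstein's theorem applied twice and the trivial observation that the roles of position and momentum coordinates may be interchanged by a linear symplectomorphism so as to realise $\Lambda$ as a fibre rather than as the zero section. The only point requiring a small remark is that the theorem gives a neighbourhood of the whole of $\Lambda$, which in particular contains the neighbourhood of the chosen point $z$ required for the localisation.
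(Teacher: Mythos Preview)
Your proposal is correct and follows exactly the approach the paper indicates: the paper's justification is simply the invocation of Darboux--Weinstein's theorem on each factor, and your write-up merely spells out those two applications and the coordinate swap in detail. One cosmetic slip: with $x^j := p_j + (x^\ast)^j$ and $y_j := q^j$ you get $\sum_j \d x^j \wedge \d y_j = -\sum_j \d q^j \wedge \d p_j$, so either flip a sign in the definition of $y_j$ or note that the Lagrangian condition is insensitive to the sign of $\omega$.
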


Let us consider the separated Lagrangian boundary value problem \eqref{eq:DirichletProblem}
on the phase space $M=M'=\R^{2n}$ with the standard symplectic form $\sum_{j=1}^n \d x^j \wedge \d y_j$.
Introducing a multi-dimensional parameter $\mu$ in the map $\phi$ or in the boundary condition, the bifurcation diagram of  \eqref{eq:DirichletProblem} can be viewed as
\begin{equation}\label{eq:DirichletBifurSet}
\{(\mu,y) \,  | \, h_\mu(y) = 0 \} 
\end{equation}
with
\begin{equation}\label{eq:mapDirichlet}
h_\mu(y) = \phi_{(\mu)}^X(x^\ast,y)-X_\mu^\ast.
\end{equation}


\begin{prop}\label{prop:kernelDIM}
If the dimension of the kernel of the Jacobian matrix of the map \eqref{eq:mapDirichlet} at a parameter value $\mu$ and a value $y$ is $m$ then the degeneracy of the singularity of the corresponding critical-points-of-a-function problem at  $(\mu,y)$ is $m$.\cite[Prop. 6]{numericalPaper}
\end{prop}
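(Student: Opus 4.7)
My plan is to compute the generating function directly in the Darboux coordinates supplied by observation \ref{obs:localcoordsDirichlet}, and then to relate its Hessian to $Dh_\mu$ through a block / Schur-complement manipulation. In those coordinates $\Pi$ is cut out by $x = x^\ast$ and $X = X^\ast_\mu$, so the natural base for observation \ref{obs:transl} is $N = \R^n_y \times \R^n_Y$. On $N$, the boundary $\Pi$ is the graph of $\d f_\mu$ with $f_\mu(y, Y) = x^\ast \cdot y - X^\ast_\mu \cdot Y$, and, provided $\det(\partial Y/\partial x) \neq 0$, the graph $\Gamma$ of $\phi_{(\mu)}$ is the graph of $\d S_\mu(y, Y)$ for a generating function determined up to a constant by $x = \partial S_\mu/\partial y$, $X = -\partial S_\mu/\partial Y$. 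Thus observation \ref{obs:transl} produces $g_\mu = S_\mu - f_\mu$ with $\mathrm{Hess}(g_\mu) = \mathrm{Hess}(S_\mu)$.

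The central step is a block computation. Write
\begin{equation*}
\mathrm{Hess}(g_\mu) = \begin{pmatrix} A & B \\ B^T & C \end{pmatrix}, \qquad A = \partial^2_{yy} S_\mu, \; B = \partial^2_{yY} S_\mu, \; C = \partial^2_{YY} S_\mu,
\end{equation*}
and note that the non-degeneracy above is equivalent to $B$ being invertible. Implicit differentiation of $x = \partial S_\mu/\partial y(y, Y)$ at fixed $x = x^\ast$ gives $\partial Y/\partial y = -B^{-1} A$, and substituting into $X = -\partial S_\mu/\partial Y$ yields
\begin{equation*}
Dh_\mu = \left.\frac{\partial X}{\partial y}\right|_{x = x^\ast} = -B^T + C B^{-1} A.
\end{equation*}
A pair $(u, w)$ lies in $\ker \mathrm{Hess}(g_\mu)$ iff $Au + Bw = 0$, forcing $w = -B^{-1} A u$, and then $B^T u + Cw = 0$, which collapses to $(B^T - C B^{-1} A) u = 0$. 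Hence the projection $(u, w) \mapsto u$ is a linear isomorphism $\ker \mathrm{Hess}(g_\mu) \to \ker Dh_\mu$, giving the desired equality of dimensions.

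The one point that needs care is the hypothesis $\det(\partial Y/\partial x) \neq 0$, which may fail at a singular solution. The conceptual way around this is to notice that both $\ker \mathrm{Hess}(g_\mu)$ and $\ker Dh_\mu$ describe the same coordinate-free object, namely the tangential contact $T_z \Gamma \cap T_z \Pi$ at the solution $z \in \Gamma \cap \Pi$; when $(y, Y)$ is not an admissible base for $\Gamma$, one chooses another projection on which both Lagrangians are simultaneously graphs (as allowed by observation \ref{obs:transl}) and repeats the block reduction there, or equivalently invokes the Darboux-Weinstein model around $z$ to bring $\Gamma$ into normal form. In practice the main obstacle is not the linear algebra but keeping the Liouville primitives and sign conventions aligned between $S_\mu$ and $f_\mu$; once those are correctly set up, the Schur-complement step is mechanical.
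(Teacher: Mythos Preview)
The paper does not actually prove this proposition: it is stated with a citation to \cite[Prop.~6]{numericalPaper}, and the subsequent coordinate-free reformulation (proposition~\ref{prop:kernelDegeneracy}) likewise defers to that reference. So there is no in-paper argument to compare against.

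Your argument is correct. The Schur-complement computation is clean, and the identification you give at the end---that both $\ker\Hess(g_\mu)$ and $\ker Dh_\mu$ are naturally isomorphic to the tangential contact $T_z\Gamma\cap T_z\Pi$---is in fact the conceptual core that makes the statement well-defined in the first place: the function $g_\mu$ produced by observation~\ref{obs:transl} depends on a choice of cotangent-bundle model, but its corank does not, precisely because it always equals $\dim(T_z\Gamma\cap T_z\Pi)$. Indeed one can run the whole proof through that identification and bypass the block manipulation: tangent vectors to $\Gamma$ with $\delta x=0$ are of the form $(0,u,D_y\phi^X\!\cdot u,\,D_y\phi^Y\!\cdot u)$, and these lie in $T_z\Pi$ exactly when $D_y\phi^X\!\cdot u=0$, so $u\mapsto(0,u,0,D_y\phi^Y\!\cdot u)$ is already an isomorphism $\ker Dh_\mu\to T_z\Gamma\cap T_z\Pi$ with no transversality hypothesis needed. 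Your handling of the possible failure of $\det(\partial Y/\partial x)\neq 0$ via this coordinate-free route is therefore adequate, and arguably preferable to the explicit generating-function calculation since it makes no such assumption from the outset.
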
 

%
%
%
%

\subsection{Coordinate free framework for separated Lagrangian boundary value problems and obstructions in their bifurcation behaviour}
\quad\\

For further analysis it will be handy to have a coordinate free version of observation \ref{obs:localcoordsDirichlet} and proposition \ref{prop:kernelDIM} available.

Given $\phi\colon (M,\omega) \to (M',\omega')$, consider a separated Lagrangian boundary value problem $( \phi, \Lambda,\Lambda')$.
Localising the problem, if necessary, there is an integrable distribution $\mathcal D$ on $M'$ such that $\Lambda'$ is a leaf of $\mathcal D$ and the projection $M' \to M'/\mathcal D$, $z' \mapsto [z']$ is a submersion.\footnote{\label{footnote:noindstr}An example where $M'/\mathcal D$ does \textit{not} inherit a smooth manifold structure via the projection map is the foliation of a torus $M'=\R^2/\Z$ by dense orbits $t \mapsto (\alpha,1) t$ for irrational $\alpha$. Shrinking $M'$ resolves the problem.}
Consider a composition of the restricted map $\phi_{|\Lambda}$ with the projection to the leaf space $M'/\mathcal D$, i.e.\
\[
\left[\phi_{|\Lambda} \right] \colon \Lambda \to M'/\mathcal D.
\]
The solution of the separated Lagrangian boundary value problem corresponds to the preimage of $[\Lambda'] \in M'/\mathcal D$ under the above map.

\begin{prop}\label{prop:kernelDegeneracy}
In the considered setting, if $z \in \Lambda$ is a solution, i.e.\ $\phi(z)\in \Lambda'$, then the dimension of the kernel of the map
\begin{equation}\label{eq:Dirichletnocoords}
\d \left.\left[\phi_{|\Lambda} \right]\right|_z \colon T_z \Lambda \to T_{[z']} \left(M'/\mathcal D\right)
\end{equation}
coincides with the degeneracy of the singularity of the corresponding critical-points-of-a-function problem. In particular,
if the problem occurs in a generic smooth family of separated Lagrangian boundary value problems at a certain parameter value then $z$ is a bifurcation point if and only if the kernel is non-trivial.
\end{prop}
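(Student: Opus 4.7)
My strategy is to use Observation \ref{obs:localcoordsDirichlet} to reduce the coordinate-free statement to the coordinate version already handled by Proposition \ref{prop:kernelDIM}. A preliminary well-definedness observation is needed: since $\Lambda'$ is a leaf of $\mathcal D$, the kernel of the submersion's derivative at $z'$ satisfies $\ker d\pi|_{z'} = \mathcal D_{z'} = T_{z'}\Lambda'$, and consequently
\[
\ker \left.\left(\d [\phi_{|\Lambda}]\right)\right|_z = \{ v \in T_z \Lambda \mid \d \phi|_z (v) \in T_{z'} \Lambda' \},
\]
which depends only on $\Lambda$, $\Lambda'$ and $\phi$, not on the particular choice of $\mathcal D$. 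This both justifies that the dimension in question is intrinsic and allows me to replace $\mathcal D$ by any convenient alternative after localising.

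Next, invoking Darboux--Weinstein exactly as in Observation \ref{obs:localcoordsDirichlet}, I pick Darboux coordinates $(x,y)$ near $z$ in $M$ and $(X,Y)$ near $z'=\phi(z)$ in $M'$ such that $\Lambda$ is cut out by $x = x^\ast$ and $\Lambda'$ by $X = X^\ast$. I then take $\mathcal D$ to be locally the vertical foliation by the slices $\{X = \mathrm{const}\}$, so that the quotient is parametrised by $X \in \R^n$ and the projection sends $(X,Y) \mapsto X$. Parametrising $\Lambda$ by $y \mapsto (x^\ast, y)$ and the quotient by $X$, the intrinsic map $[\phi_{|\Lambda}]$ becomes
\[
y \,\longmapsto\, \phi^X(x^\ast,y) \;=\; h(y) + X^\ast,
\]
which differs from the map $h$ of \eqref{eq:mapDirichlet} by a constant translation. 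Hence the kernel of $\d [\phi_{|\Lambda}]|_z$ and the kernel of the Jacobian of $h$ at $y$ have the same dimension, and Proposition \ref{prop:kernelDIM} immediately yields the first claim.

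For the ``in particular'' statement I appeal to catastrophe theory as recalled in Remark \ref{rem:recapCatastropheTheory}: in a generic smooth family the implicit function theorem ensures that a non-degenerate critical point of $g_\mu$ persists under small parameter perturbations and therefore is never a bifurcation point, while conversely at any bifurcation value of the parameter the Hessian of $g_\mu$ at the critical point must have non-trivial kernel. Combined with the equivalence just proved, this gives that $z$ is a bifurcation point if and only if $\d [\phi_{|\Lambda}]|_z$ is singular.

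The main obstacle I anticipate is simply keeping the auxiliary distribution $\mathcal D$ under control: one must first see that the kernel in \eqref{eq:Dirichletnocoords} depends only on $T_{z'}\Lambda'$ before it is legitimate to swap $\mathcal D$ for the coordinate foliation adapted to Darboux--Weinstein. Once that transparency is secured, the remainder is essentially a translation of Proposition \ref{prop:kernelDIM} into intrinsic language together with the standard genericity fact from catastrophe theory.
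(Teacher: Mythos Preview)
Your proposal is correct and follows exactly the approach of the paper, which simply states that the assertion corresponds to Proposition~\ref{prop:kernelDIM} (proved in \cite{numericalPaper}); you have merely spelled out the reduction via Observation~\ref{obs:localcoordsDirichlet} and the independence of the kernel from the choice of $\mathcal D$ in more detail than the paper does.
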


\begin{proof}
The assertion corresponds to proposition \ref{prop:kernelDIM} which is proved in \cite{numericalPaper}.
\end{proof}

\begin{remark} For determining the kernel of \eqref{eq:Dirichletnocoords} we can weaken the condition that $\Lambda'$ is a leaf of $\mathcal D$ to the requirement that $\mathcal D_{z'} = T_{z'}\Lambda'$.
\end{remark}


\begin{corollary}\label{cor:dimrestr}
In separated Lagrangian boundary value problems the degeneracy of a singularity cannot exceed $\dim \Lambda$, i.e.\ half the phase space dimension.
\end{corollary}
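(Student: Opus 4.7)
The plan is essentially a one-line consequence of Proposition \ref{prop:kernelDegeneracy}. By that proposition, the degeneracy of the singularity at a solution $z \in \Lambda \cap \phi^{-1}(\Lambda')$ equals the dimension of the kernel of the linear map
\[
\d\!\left.\left[\phi_{|\Lambda}\right]\right|_z \colon T_z\Lambda \longrightarrow T_{[z']}(M'/\mathcal{D}).
\]
Since this kernel is a linear subspace of the source, its dimension is bounded above by $\dim T_z\Lambda = \dim \Lambda$. Recalling that $\Lambda \subset M$ is a Lagrangian submanifold of the $2n$-dimensional symplectic manifold $(M,\omega)$, we have $\dim \Lambda = n$, i.e.\ half the phase space dimension. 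Combining these two observations yields the claimed bound.

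The only small thing to verify is that the hypotheses needed to apply Proposition \ref{prop:kernelDegeneracy} are in force. This is automatic after localisation: shrinking $\Lambda'$ (and $M'$) if necessary we may choose an integrable distribution $\mathcal{D}$ on $M'$ with $\Lambda'$ as a leaf and $M' \to M'/\mathcal{D}$ a submersion, as discussed just before the proposition (footnote \ref{footnote:noindstr} explains why shrinking suffices). So nothing nontrivial is hidden; the corollary is a direct dimension count, and there is no real obstacle.
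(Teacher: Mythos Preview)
Your argument is correct and matches the paper's approach exactly: the corollary is stated without proof immediately after Proposition~\ref{prop:kernelDegeneracy}, being an obvious dimension count on the domain of the linear map~\eqref{eq:Dirichletnocoords}. There is nothing to add.
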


\begin{corollary}
In separated Lagrangian boundary value problems for generic families of symplectic maps on $T^\ast \R$ only $A$-series singularities\footnote{See the ADE classification of singularities, e.g.\ in \cite[p.33]{Arnold1}.} occur. 
\end{corollary}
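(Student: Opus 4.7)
The plan is to apply corollary \ref{cor:dimrestr} to show that the corank of any singularity that can arise is at most $1$, and then invoke the ADE classification of singularities to observe that corank-$1$ singularities are exactly the $A$-series.

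More concretely, since $T^\ast \R$ has dimension $2$, we have $n = 1$, so any Lagrangian submanifold $\Lambda \subset T^\ast \R$ is one-dimensional. By corollary \ref{cor:dimrestr}, the degeneracy of a singularity in a separated Lagrangian boundary value problem is bounded by $\dim \Lambda = 1$. Translating this via observation \ref{obs:transl} and definition \ref{def:SingBVP} to the catastrophe theory side, the generating function $g_\mu$ can, after reduction by the splitting lemma, be taken to depend on at most one essential variable; that is, the fully reduced family acts on $\R^k$ with $k \le 1$.

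Next I would recall the ADE classification (as cited in the footnote, see \cite[p.33]{Arnold1}): singularities of corank $1$ are precisely the $A_k$ family, while $D_k$ and $E_k$ singularities have corank $\ge 2$. Hence the only singularities compatible with the corank bound $k \le 1$ are the $A$-series $A_k$ (including the non-singular case corresponding to $k = 0$).

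Finally, I would note that genericity enters only to ensure that $g_\mu$ lies in the open dense set of families for which the standard catastrophe-theoretic classification applies and that the singularity type is determined by the right (or right-left) equivalence class of the germ; no further work is required beyond the dimension count, since the classification itself is the substance of the cited theorem. The only point that might look delicate is verifying that the translation from Lagrangian boundary value problems to generating functions preserves the notion of corank, but this is exactly the content of proposition \ref{prop:kernelDegeneracy}, so there is no real obstacle.
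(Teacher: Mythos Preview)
Your proposal is correct and matches the paper's intended argument: the corollary is stated without proof in the paper, as it follows immediately from Corollary~\ref{cor:dimrestr} (giving degeneracy $\le 1$ on $T^\ast\R$) together with the ADE classification cited in the footnote, which is precisely the route you take. Your remarks on genericity and on proposition~\ref{prop:kernelDegeneracy} are appropriate elaborations of what the paper leaves implicit.
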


Notice that for other, non-separated Lagrangian boundary value problems no such restrictions are valid. In contrast to corollary \ref{cor:dimrestr} we have

\begin{prop}\label{prop:nobifurrestrperiod}
In periodic boundary value problems for smooth families of symplectic maps on a $2n$-dimensional symplectic manifold all singularities up to degeneracy $2n$ occur.
Moreover, the generic singularities (i.e.\ non-removable under small perturbations) are naturally in 1-1 correspondence to the generic singularities occurring in the critical-points problem (remark \ref{rem:recapCatastropheTheory}) with up to $2n$ variables.
\cite[Prop. 3.4]{bifurHampaper}
\end{prop}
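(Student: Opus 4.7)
My plan is to establish the correspondence between periodic BVPs and gradient-zero problems in $2n$ variables by specializing the general translation of Observation \ref{obs:transl} to the case where the boundary Lagrangian is the diagonal. The key point is that the diagonal $\Pi = \{(m,m)\,|\,m\in M\}$ in $(M\times M, \omega\oplus-\omega')$ is diffeomorphic to $M$ itself and thus has the full dimension $2n$, in contrast to the separated case where the reference manifold $N$ can be taken to have only dimension $n$ (as in Observation \ref{obs:localcoordsDirichlet}). This dimensional gain is what allows all singularities up to degeneracy $2n$.

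The first step is to apply the Weinstein Lagrangian neighbourhood theorem to $\Pi$: an open neighbourhood of $\Pi$ in $(M\times M, \omega\oplus-\omega')$ is symplectomorphic to an open neighbourhood of the zero section of $T^\ast \Pi \cong T^\ast M$. Under this identification, the diagonal corresponds to the zero section, and the graph $\Gamma$ of a symplectic map $\phi$ that is $C^1$-close to the identity near a fixed point $z_0$ becomes a Lagrangian submanifold $\tilde\Gamma$ that is $C^1$-close to, hence locally graphical over, the zero section. Because $M$ is locally simply connected, $\tilde\Gamma$ is the image of an exact 1-form $\d g$ for some local generating function $g\colon M\to\R$, and fixed points of $\phi$ correspond bijectively to critical points of $g$. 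For a smooth family $\phi_\mu$, this yields a smooth family $g_\mu$ of functions in $2n$ variables, so Proposition \ref{prop:kernelDIM} (or its coordinate-free version) identifies the degeneracy of a singularity at $z_0$ with the dimension of the kernel of $\Hess g_\mu$, which can be anything from $0$ to $2n$.

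For the converse direction, which establishes that \emph{every} germ of a smooth function (or parametrised family) in $2n$ variables is realised, I would argue that given any local $g\colon M\to\R$ (resp.\ family $g_\mu$), the image of $\d g$ under the inverse Weinstein symplectomorphism is a Lagrangian submanifold of $M\times M$, and for $\d g$ sufficiently small this Lagrangian is graphical over the first factor, hence the graph of a unique symplectic map $\phi$ (resp.\ family $\phi_\mu$). Together with the first step, this yields the natural bijection between germs of families of symplectic maps near a fixed point and germs of families of smooth functions on $\R^{2n}$, which restricts to a bijection at the level of right- (or right-left-)equivalence classes and therefore on generic singularities (Remark \ref{rem:recapCatastropheTheory}).

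The main obstacle I expect is verifying that the correspondence is well-defined on the level of \emph{equivalence classes}: different choices of Weinstein symplectomorphism, or different local Darboux coordinates on $T^\ast M$, produce generating functions that differ by a smooth change of coordinates on the domain, hence are right-equivalent, and this must be checked to ensure that the bifurcation-theoretic type of the singularity is an invariant of the symplectic problem rather than an artefact of the chosen identification. Once this gauge invariance is in hand, the density and transversality arguments of catastrophe theory \cite{Arnold1} transfer directly to show that the generic singularities in the two settings coincide, completing both assertions of the proposition.
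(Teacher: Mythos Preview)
The paper does not actually prove this proposition in-text: it is stated with a citation to \cite[Prop.~3.4]{bifurHampaper} and no further argument is given here. So there is no proof in the present paper to compare against, only the pointer to the earlier work.

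That said, your outline is the standard argument and is precisely the specialisation of Observation~\ref{obs:transl} to the diagonal that one would expect the cited reference to carry out: Weinstein's Lagrangian neighbourhood theorem identifies a neighbourhood of the diagonal $\Pi\cong M$ in $(M\times M,\omega\oplus(-\omega))$ with a neighbourhood of the zero section in $T^\ast M$, the graph of a $C^1$-close-to-identity symplectic map becomes the image of an exact $1$-form $\d g$ with $g$ a function of $2n$ variables, and the correspondence is reversible so that every germ of a family $g_\mu\colon\R^{2n}\to\R$ arises. Your remark that the well-definedness on equivalence classes needs checking (different Weinstein tubular neighbourhoods give right-equivalent generating functions) is the one genuine verification required, and it is exactly the content of Remark~\ref{rem:REquivalenceJustification} and the reference therein. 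Nothing in your proposal is wrong or missing; it is the expected route.
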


\section{Obstructions for singularities in systems with conformal symplectic symmetry}\label{sec:ObstrConfSym}

By corollary \ref{cor:dimrestr}, the geometric structure of separated Lagrangian boundary value problems forbids bifurcations whose fully reduced representatives of the corresponding critical-points problems need more variables than half the dimension of the phase space.
It turns out that in systems with a (non-trivial) conformal symplectic, $k$-dimensional group action leaving the boundary condition and the motions invariant up to time-rescaling even stronger restrictions apply.
Indeed, if the Hamiltonian vector field is not tangential to the boundary condition then in $2n$-dimensional systems only singularities of degeneracy at most $n-k$ occur in separated Lagrangian boundary value problems. Moreover, the requirement that the boundary condition is invariant can be weakened to the condition that the action is tangential to the boundary condition. 
The new result applies to homogeneous Hamiltonian systems and we will show that this provides an alternative viewpoint on geodesic bifurcations and recovers classical results about multiplicities of conjugate points along geodesics.

\subsection{General conformal symplectic symmetric Hamiltonians}

\begin{definition}[Conformal symplectic map]
Let $(M,\omega)$ and $( M', \omega')$ be two symplectic manifolds. A map $\chi \colon M \to  M'$ is called a \textit{conformal symplectic map} if $\chi^\ast  \omega' = \theta \cdot \omega$ for some $\theta \in \R$.
\end{definition}

\begin{example}\label{ex:thetachi} Let $N$ be a smooth manifold. The cotangent bundle $M=T^\ast N$ with the projection map $\pi \colon M \to N$ can be equipped with the standard symplectic structure $\omega = -\d \lambda$, where $\lambda$ denotes the Liouvillian-1-form on $M$ which is canonically defined by
\begin{equation}\label{eq:Liouvillian}
\lambda_\alpha(v) = \alpha(\d \pi|_{\alpha}(v)) \quad \text{for all }\, \alpha \in M,\, v \in T_\alpha M.
\end{equation}
To $\theta \in \R$ consider $\chi\colon M \to M$ with $\chi(\alpha)=\theta \cdot \alpha$, where $\cdot$ denotes the usual scalar multiplication of forms with real numbers. The map $\chi$ is a conformal symplectic map with  $\chi^\ast \omega = \theta \cdot \omega$ because for each $\beta \in M$ and $w \in T_\beta M$ we have
\[
(\chi^\ast \lambda)|_{\beta}(w)
= \lambda|_{\theta \cdot \beta}(\d \chi (w))
\stackrel{\eqref{eq:Liouvillian}}= (\theta \cdot \beta)( \d (\pi \circ \chi) (w))
= (\theta \cdot \beta) (\d \pi (w))
= (\theta \cdot \lambda)|_{\beta}(w),
\]
where we have used that $\chi$ preserves the fibres of the cotangent bundle.
\end{example}

\begin{remark} Let $(M,\omega)$ and $(M',\omega')$ be symplectic manifolds with $\dim M >2$.
If a map $\chi\colon M \to M'$ fulfils $\chi^\ast \omega' = \theta \cdot \omega$ for a smooth map $\theta \colon M \to \R$ then $\theta$ is constant, i.e.\ $\chi$ is a conformal symplectic map: since $\omega$ is closed,
\[
0 = \chi^\ast(\d \omega')
=\d \chi^\ast( \omega')
= \d (\theta \cdot \omega)
= \d \theta \wedge \omega + \theta \wedge \d \omega
= \d \theta \wedge \omega.
\]
By non-degeneracy of $\omega$ and $\dim M > 2$ it follows that $\theta$ is constant. Therefore, $\chi$ is a conformal symplectic map. \cite{TrGroupDiffGeo}
\end{remark}

Before formulating the main theorem of this section, we prove the following:

\begin{lemma}[Conformal symplectic transformations of Hamiltonians]\label{lem:homtime} Let $(M,\omega,H)$ and $( M',\omega', H')$ be two Hamiltonian systems with flow maps $\phi_t$ and $ \phi_t'$, respectively. Consider a conformal symplectic diffeomorphism $\chi \colon M \to  M'$ with $\chi^\ast  \omega' = \theta \cdot \omega$ for $\theta \in \R$. If $ H' \circ \chi = \eta \cdot H$ for $\eta \in \R$ then
\[
\chi \circ  \phi_{(\eta / \theta) \cdot t}=  \phi'_t \circ \chi
\]
(wherever defined).
\end{lemma}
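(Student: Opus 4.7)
The plan is to reduce the global flow identity to an infinitesimal (vector field) identity, then use uniqueness of integral curves of ODEs. Concretely, I would let $X_H$ and $X_{H'}$ denote the Hamiltonian vector fields on $(M,\omega)$ and $(M',\omega')$ defined by $\iota_{X_H}\omega = \d H$ and $\iota_{X_{H'}}\omega' = \d H'$, and aim to show that
\[
\d\chi \cdot X_H = \tfrac{\theta}{\eta}\, X_{H'} \circ \chi.
\]
Once this is established, the curves $t \mapsto \chi(\phi_{(\eta/\theta)t}(z))$ and $t \mapsto \phi'_t(\chi(z))$ both solve the ODE $\dot{w} = X_{H'}(w)$ with the same initial value $\chi(z)$ at $t=0$, so they coincide on their common domain by uniqueness.

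For the vector field identity, I would pick an arbitrary point $z\in M$ and an arbitrary tangent vector $v \in T_{\chi(z)}M'$. Since $\chi$ is a diffeomorphism, write $v = \d\chi(u)$ for a unique $u \in T_z M$. Then using $\chi^\ast\omega' = \theta\cdot\omega$ on one hand and $H'\circ\chi = \eta\cdot H$ (hence $\d H'\circ \d\chi = \eta\cdot\d H$) on the other, I would compute
\[
\omega'\bigl(\d\chi\cdot X_H,\, v\bigr) = (\chi^\ast\omega')(X_H, u) = \theta\,\omega(X_H,u) = \theta\,\d H(u) = \tfrac{\theta}{\eta}\,\d H'(v) = \tfrac{\theta}{\eta}\,\omega'(X_{H'}, v).
\]
Non-degeneracy of $\omega'$ and arbitrariness of $v$ then give the claimed relation between $\d\chi \cdot X_H$ and $X_{H'}\circ\chi$.

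The final step is straightforward: differentiating both sides of the proposed identity at $t=0$ recovers exactly the vector field relation just derived (with a chain-rule factor of $\eta/\theta$ on the left), and both sides clearly agree at $t=0$, so the standard uniqueness theorem for flows completes the argument.

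I do not anticipate any serious obstacle; the only thing to be careful about is the bookkeeping of the constants $\theta$ and $\eta$ (in particular making sure the time rescaling factor appears as $\eta/\theta$ and not its reciprocal), and noting that the statement is local in $t$ — the identity holds wherever both flows are defined, which is why the lemma is phrased with the parenthetical \emph{wherever defined}.
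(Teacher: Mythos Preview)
Your proposal is correct and follows essentially the same route as the paper: establish the vector field identity relating $X_H$ and $X_{H'}$ through $\chi$ (the paper writes it as $\chi^\ast X_{H'} = (\eta/\theta)\,X_H$ via contractions, you give the equivalent pushforward form $\d\chi\cdot X_H = (\theta/\eta)\,X_{H'}\circ\chi$ via a pointwise pairing) and then invoke uniqueness of integral curves. The only items the paper makes explicit that you leave implicit are that $\theta\neq 0$ (forced by $\chi$ being a diffeomorphism and $\omega'$ nondegenerate) and, in your formulation, the trivial edge case $\eta=0$, since your chain of equalities divides by $\eta$ whereas the paper's does not.
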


\begin{proof}
Since $\chi$ is a diffeomorphism and $ \omega'$ is non-degenerate, $\chi^\ast  \omega' = \theta\cdot \omega$ is non-degenerate, so that $\theta \not =0$. In particular, the time-scaling factor ${\eta}/{\theta}$ in the assertion is well-defined.
Define Hamiltonian vector fields $X_H$ and $ X'_{ H'}$ by
\[
\d H =  \iota_{X_H}\omega, \qquad \d  H' =  \iota_{X'_{ H'}} \omega',
\]
where $\iota_X \omega$ denotes the contraction of the 2-form $\omega$ with a vector field $X$. We calculate

\begin{align*}
\theta \cdot \iota_{\chi^\ast  (X'_{ H'})}\omega
&= \iota_{\chi^\ast  (X'_{ H'})}(\chi^\ast \omega')
= \chi^\ast \big( \iota_{X'_{ H'}}\omega'\big)\\
&= \chi^\ast \d  H'
= \d ( H' \circ \chi)
= \eta\cdot \d H 
= \eta \cdot \iota_{X_H}\omega.
\end{align*}

Since $\theta \not=0$ we have
\[
\iota_{\chi^\ast  (X'_{ H'})}\omega
= \frac{\eta}{\theta} \iota_{X_H}\omega
=  \iota_{({\eta}/{\theta}) X_H}\omega
=  \iota_{X_{({\eta}/{\theta}) \cdot H}}\omega.
\]
By non-degeneracy of the symplectic form $\omega$ it follows that the vector fields $X_{({\eta}/{\theta}) \cdot H}$ and $\chi^\ast  X'_{ H'}$ coincide. Notice further that if $\dot \gamma(t)=X_H(\gamma(t))$ then
\[
\frac{\d}{\d t} \big(\gamma(\alpha t)\big)
=\alpha \dot \gamma( \alpha t)
= \alpha X_H(\gamma( \alpha t))
= X_{\alpha H}(\gamma( \alpha t))
\]
for $\alpha \in \R$, i.e.\ $t \mapsto \gamma(\alpha t)$ is a flow line of the Hamiltonian vector field corresponding to the Hamiltonian $\alpha H$. We can conclude that the following statements are equivalent.
\begin{itemize}
\item
The curve $t \mapsto  \gamma(t) \in M$ is a flow line of $ X_{H}$ through $\gamma(0)$ at $t=0$. 
\item
The curve $t \mapsto  \gamma((\eta/\theta) \cdot t) \in M$ is a flow line of $ X_{(\eta/\theta) \cdot H} = \chi^\ast  X'_{ H'}$ through $\gamma(0)$ at $t=0$.  
\item
The curve $t \mapsto (\chi \circ \gamma)((\eta/\theta) \cdot t) \in  M'$ is a flow line of $ X'_{ H'}$ through $\chi(\gamma(0))$ at $t=0$. 
\end{itemize}
Thus
\[
\chi \circ  \phi_{({\eta}/{\theta})\cdot t}=\phi'_t \circ \chi.
\]
\end{proof}

\begin{example}\label{ex:HamInvuptoscaling} Consider the Hamiltonian
\[
H(\alpha) = \frac 12 \alpha(g^{-1}(\alpha)), \quad \alpha \in T^\ast N
\]
from \eqref{eq:HamGeoCordsfree} on the cotangent bundle $(M=T^\ast N,\omega)$ over a smooth Riemannian manifold $(N,g)$. %
The multiplicative Lie group of positive real numbers $\R^+ = (0,\infty)$ acts on $M$ by the conformal symplectic diffeomorphisms $\chi_\theta \colon M \to M$, $\chi_\theta(\alpha)=\theta \cdot \alpha$ analysed in example \ref{ex:thetachi}. We have $H \circ \chi_\theta = \theta^2 \cdot H$ such that for the Hamiltonian flow $\phi_t$
\begin{equation}\label{eq:timescalinggeodesics}
\chi_\theta \circ \phi_{\theta \cdot t} = \phi_{t} \circ \chi_{\theta}
\end{equation}
by lemma \ref{lem:homtime}. The motions of the Hamiltonian system $(M,\omega,H)$ correspond to the velocity vector fields of the geodesics on $N$ under the bundle isomorphism between $TN$ and $T^\ast N$ defined by $g$. Therefore \eqref{eq:timescalinggeodesics} corresponds to the fact that a geodesic $ \gamma'$ starting at $q \in N$ with initial velocity $\dot { \gamma'}(0) = \theta \cdot v \in T_q N$ reaches the same point $ \gamma'(t) \in N$ after time $t$ as a geodesic $\gamma$ starting at $q \in N$ with initial velocity $\dot \gamma(0) = v \in T_q N$ after time $\theta \cdot t$ and the end velocities fulfil $\theta \cdot \dot { \gamma'}(t) = \dot \gamma(\theta \cdot t)$.
\end{example}

Consider a Hamiltonian system $(\tilde M, \omega, H)$ with Hamiltonian flow map denoted by $\phi_t$ and two Lagrangian submanifolds $\Lambda, \Lambda' \subset \tilde M$. Denote the time-1-map $\phi_1$ by $\phi$ and assume that there exists $z \in \Lambda$ with $z'=\phi(z) \in \Lambda'$. Consider the separated Lagrangian boundary value problem $(\phi, \Lambda,\Lambda')$ localised around $z \in M\subset \tilde M$, $z'\in M' \subset \tilde M${, where $M$, $M'$ are open subsets of $\tilde M$}.

The following theorem shows that if $H$ is
invariant up to scaling under a conformal symplectic action of a $k$-dimensional Lie group
acting tangentially to $\Lambda$ at $z$ 
and $X_H(z') \not \in T_{z'}\Lambda'$ then the degeneracy of the singularity at $z$ is at most $\frac 12 \dim M-k$.
This refines the statement of corollary \ref{cor:dimrestr} and applies, for instance, everywhere away from $y =0$ to the Dirichlet-type problem \eqref{eq:DirichletProblem} if $H$ is homogeneous in $y$.

\begin{theorem}[Singularity obstructions for conformal symplectic Hamiltonian boundary value problems]\label{thm:confsymHam}
Consider the separated Lagrangian boundary value problem $(\phi, \Lambda,\Lambda')$ localised around a solution $z \in \Lambda\subset(M,\omega) $, $z' = \phi(z)\in \Lambda'\subset  (M',\omega)$ in an ambient Hamiltonian system $(\tilde M,\omega,H)$ with flow map $\phi_t$ denoting $\phi_1=\phi$.
Consider a Lie group $G$ with action $g \mapsto \chi_g$ on $M \cup M'$ defined locally on a neighbourhood $U$ of the neutral element $e \in G$ and the neighbourhood $\mathfrak U = \exp^{-1}(U)$ of $0$ in the Lie algebra of $G$.
Assume that there exist smooth maps $\theta, \eta \colon \mathfrak U \to \R$ such that for all $V\in \mathfrak U$
\begin{enumerate}
\item
the action is conformal symplectic, $\chi_{\exp(V)}^\ast \omega = \theta(V) \cdot \omega$,
\item
the Hamiltonian is invariant under the action up to scaling, $H \circ \chi_{\exp(V)} = \eta(V) \cdot H$,
\item
the time scaling factor for the flow map is not stationary at $s=0$, i.e.\
\begin{equation}\label{eq:nonstationaryassumption}
\left.\frac{\d}{\d s}\right|_{s=0}\left( \frac{\eta(sV)}{\theta(sV)}\right) \not =0,
\end{equation}
\item
the group acts tangentially to $\Lambda$ at $z$, i.e.\ $V^{\#}_z \in T_z \Lambda$, where $V^{\#}$ is the fundamental vector field corresponding to $V$,
\item
and the Hamiltonian vector field is not tangent to $\Lambda'$ at $z'$, i.e.\ $X_H(z') \not \in T_{z'}\Lambda'$.
\end{enumerate}
Then the degeneracy of a singularity of the Lagrangian boundary value problem at $z$ is at most $\frac 12 \dim M - \dim G$.
\end{theorem}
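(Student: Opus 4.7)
The proof plan rests on Proposition \ref{prop:kernelDegeneracy}, which identifies the degeneracy at $z$ with the kernel dimension of
\[
d[\phi|_{\Lambda}]|_z \colon T_z\Lambda \longrightarrow T_{[z']}(M'/\mathcal{D}),
\]
where $\mathcal{D}$ is an integrable distribution with $\mathcal{D}_{z'}=T_{z'}\Lambda'$. Writing $n=\tfrac12\dim M$ and $k=\dim G$, the desired bound $\leq n-k$ is equivalent to this differential having rank at least $k$. Factoring it as $d\phi|_z$ followed by the projection $\pi\colon T_{z'}M'\to T_{z'}M'/T_{z'}\Lambda'$, and using that $d\phi|_z$ is a linear isomorphism, it suffices to produce a $k$-dimensional subspace $W\subset T_z\Lambda$ on which $\pi\circ d\phi|_z|_W$ is injective.

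For $W$ I take the tangent space to the $G$-orbit through $z$: condition (4) places $\{V^\#_z:V\in\mathfrak{g}\}$ inside $T_z\Lambda$, and local freeness of the $G$-action (implicit in the non-degeneracy hypotheses) gives $\dim W=k$. To compute $d\phi|_z(V^\#_z)$, I apply Lemma \ref{lem:homtime} to $\chi=\chi_{\exp(sV)}$: setting $\tau(V):=\eta(V)/\theta(V)$, one has $\tau(0)=1$ and the intertwining
\[
\phi\circ\chi_{\exp(sV)}=\chi_{\exp(sV)}\circ\phi_{\tau(sV)}.
\]
Applying both sides to $z$, differentiating at $s=0$, and using $\phi_{\tau(0)}(z)=z'$ together with the product rule yields the key formula
\begin{equation}\label{eq:plankey}
d\phi|_z(V^\#_z)=V^\#_{z'}+c(V)\,X_H(z'),\qquad c(V):=\left.\frac{d}{ds}\right|_{s=0}\tau(sV).
\end{equation}

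Applying $\pi$ to \eqref{eq:plankey} gives $[V^\#_{z'}]+c(V)\,[X_H(z')]$ in the quotient. Condition (5) ensures $[X_H(z')]\neq 0$, and condition (3) provides the non-vanishing of the linear functional $c$. The remaining injectivity of $\pi\circ d\phi|_z|_W$ comes down to showing that $V^\#_{z'}+c(V)X_H(z')\in T_{z'}\Lambda'$ forces $V=0$. In the motivating symmetric setting the $G$-action is also tangent to $\Lambda'$ at $z'$, so $V^\#_{z'}\in T_{z'}\Lambda'$ automatically and the condition collapses to $c(V)X_H(z')\in T_{z'}\Lambda'$; by (5) this forces $c(V)=0$, and local freeness of the action together with (3) then rules out nontrivial $V$ in the orbit direction.

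The hardest part will be pinning down this injectivity step precisely enough to deliver a genuine $k$-dimensional drop in the kernel rather than just the rank-one contribution visible from $[X_H(z')]$ alone. This requires the interplay of (3), (4) and the Lagrangian character of $\Lambda'$ to be handled carefully: the conformal symplectic nature of the action, combined with $\Lambda'$ being Lagrangian, should restrict how the vectors $V^\#_{z'}$ sit relative to $T_{z'}\Lambda'$, and it is this compatibility --- rather than any single listed hypothesis in isolation --- that converts the rank-one obstruction into a rank-$k$ one.
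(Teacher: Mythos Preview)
Your plan is essentially the paper's argument: both rest on the identity $\d\phi|_z(V^\#_z)=V^\#_{z'}+c(V)\,X_H(z')$ obtained from the intertwining relation of Lemma~\ref{lem:homtime}. The piece you are missing is how the paper disposes of the $V^\#_{z'}$ term. It does not argue about where $V^\#_{z'}$ sits relative to $T_{z'}\Lambda'$; instead it exploits the freedom in choosing $\mathcal D$ (only $\mathcal D_{z'}=T_{z'}\Lambda'$ matters for Proposition~\ref{prop:kernelDegeneracy}) and, in its Step~1, builds $\mathcal D$ so that every leaf is a union of $G$-orbits. Then $\chi_{\exp(sV)}$ preserves leaves, whence $\left[(\chi_{\exp(sV)}\circ\phi_{\eta(sV)/\theta(sV)})(z)\right]=\left[\phi_{\eta(sV)/\theta(sV)}(z)\right]$ in $M'/\mathcal D$ for all $s$, and the orbit contribution is absorbed by the quotient \emph{before} differentiating. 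No Lagrangian-compatibility argument of the sort you sketch is invoked.

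Two remarks on your closing worries. First, your instinct about tangency at $z'$ is sound: an orbit-saturated $\mathcal D$ with $\mathcal D_{z'}=T_{z'}\Lambda'$ can be constructed only if $T_{z'}O_{z'}\subset T_{z'}\Lambda'$, so the paper's Step~1 tacitly uses precisely the hypothesis you isolate (it holds in all the applications, where $\Lambda,\Lambda'$ are cotangent fibres and the scaling action is fibrewise). Second, on the rank-one issue: $c$ is the differential of $\eta/\theta$ at $0$, hence a \emph{linear} functional on $\mathfrak g$, so hypothesis~(3) read for all nonzero $V$ already forces $\dim G\le1$; the paper's computation likewise lands in the one-dimensional span of $[X_H(z')]$ and does not extract any higher-rank contribution.
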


%


We prepare the proof of theorem \ref{thm:confsymHam} by discussing the assumptions.

\begin{remark}

The assumptions of the theorem imply that $\theta(V) \not =0$ for all $V \in \mathfrak U$
and
\begin{equation}\label{eq:thetaeta1}
\theta(0)=1=\eta(0).
\end{equation}
The quotient ${\eta(sV)}/{\theta(sV)}$ is the time-scaling factor appearing in lemma \ref{lem:homtime}. Applied to the setting of the theorem, the lemma says
\begin{equation}\label{eq:lemmaapplied}
\chi_{\exp(sV)} \circ \phi_{{\eta(sV)}/{\theta(sV)} \cdot t} = \phi_t \circ \chi_{\exp(sV)}.
\end{equation}
\end{remark}

\begin{remark}\label{rem:foliation}
Let $\dim M=2n$ and $\dim G=k$.
The non-stationary assumption \eqref{eq:nonstationaryassumption} can be interpreted as a non-degeneracy assumption on the action. It implies that the intersections of the isotropy groups near $z$ and $z'$ with $U$ are trivial and that the group orbits of $G$ constitute a $k$-dimensional foliation around $z$ and $z'$. In particular, $k \le \frac 12 \dim M$ due to the tangency condition. 
\end{remark}

\begin{remark}
The non-stationary assumption \eqref{eq:nonstationaryassumption} ensures that the time-scaling in \eqref{eq:lemmaapplied} depends on $s$ to linear order. This means that the assumptions do \textit{not} apply to symplectic group actions leaving $H$ invariant. 
\end{remark}



\begin{proof}[Proof of theorem \ref{thm:confsymHam}]
\textit{Step 1.} We construct an integrable distribution $\mathcal D$ over $M'$ with leaves consisting of orbits and with $\mathcal D_{z'}=T_{z'}\Lambda'$.

Let $2n=\dim M'$ and $k = \dim G$.
By remark \ref{rem:foliation} we have $k \le n$ and the orbits of the group action provide a $k$-dimensional foliation $O$ of $M'$ (shrinking $M'$ around $z'$ if necessary). There exists an $n-k$-dimensional manifold $N$ containing $z'$ which is transversal to each orbit $O_a$ with $a \in N$ such that $T_{z'}N \oplus T_{z'} O_{z'} = T_{z'}\Lambda'$. The collection of orbits $L_{z'}=\bigcup_{a \in N}O_a$ defines an $n$-dimensional submanifold of $M'$. By construction $T_{z'}L_{z'} = T_{z'}\Lambda'$.

We extend $L_{z'}$ to a foliation of $M'$ as follows: shrinking $M'$, if necessary, the projection map $\pi \colon M' \to M'/  O$ to the $2n-k$-dimensional space of leaves is a submersion and $\pi(L_{z'})$ is a smooth, $n-k$-dimensional submanifold of $M'/  O$ (thought of as $N$). Shrinking $M'$, if necessary, we find a foliation $\mathcal L$ of $M'/  O$ with leaf $\mathcal L_{\pi(z')} = \pi(L_{z'})$. The preimages of the leaves of $\mathcal L$ under $\pi^{-1}$ form the desired foliation of $M'$ giving rise to an integrable distribution $\mathcal D$.

\textit{Step 2.} We show that the kernel of the map
\[ 
\d \left.\left[\phi_{|\Lambda} \right]\right|_z \colon T_z \Lambda \to T_{[z']} \left(M'/\mathcal D\right)\]
is at {most $n-k$}-dimensional such that the claim follows by proposition \ref{prop:kernelDegeneracy}.
Let $V \in \mathfrak U$ with fundamental vector field $V^{\#}$. By assumption $V^{\#}_z \in T_z \Lambda$. We calculate
\begin{align*}
\d \left.\left[\phi_{|\Lambda} \right]\right|_z \left(V^{\#}_z\right)
&=\left.\frac{\d}{\d s}\right|_{s=0}
\left[\left({\phi_1} \circ \chi_{\exp(sV)}\right)(z) \right]\\
&\stackrel{\eqref{eq:lemmaapplied}}=\left.\frac{\d}{\d s}\right|_{s=0}
\left[\left( \chi_{\exp(sV)} \circ {\phi_{\eta(sV)/\theta(sV) \cdot 1}}\right)(z) \right]\\
&\stackrel{(\ast)}=\left.\frac{\d}{\d s}\right|_{s=0}
\left[\left(  {\phi_{\eta(sV)/\theta(sV) \cdot 1}}\right)(z) \right]\\
&=\left[\left.\frac{\d}{\d s}\right|_{s=0} \left(\frac{\eta(sV)}{\theta(sV)}\right) \cdot X_H\left(\phi_{\eta(0)/\theta(0) }(z)\right) \right]\\
&\stackrel{\eqref{eq:thetaeta1}}=\left[\underbrace{\left.\frac{\d}{\d s}\right|_{s=0} \left(\frac{\eta(sV)}{\theta(sV)}\right)}_{\not=0 \text{ by } \eqref{eq:nonstationaryassumption}} \cdot \underbrace{X_H(z')}_{\not \in \mathcal D_{z'}=T_{z'}\Lambda'} \right]\\
&\not =0.
\end{align*}
The equality $(\ast)$ is due to the invariance of the distribution $\mathcal D$ under the group action.
\end{proof}

Theorem \ref{thm:confsymHam} applies to Hamiltonian systems where the Hamiltonian is homogeneous in the direction of $\Lambda$ and $\Lambda'$.
Consider the cotangent bundle over a smooth manifold with symplectic coordinates $(x,y) = (x^1,\ldots,x^n,y_1,\ldots,y_n)$ and a homogeneous Hamiltonian in the $y$-coordinates. Then theorem \ref{thm:confsymHam} applies to the Dirichlet-type problem \eqref{eq:DirichletProblem} leading to the following


\begin{prop}[Singularity obstruction for separated Lagrangian boundary value problem in homogeneous Hamiltonian systems]\label{prop:DimResthomDirichlet}Consider a Hamiltonian system $(\R^{2n},\omega_{\mathrm{std}},H)$ with $H(x,\lambda y) = \lambda^p H(x,y)$ with $p \not = 1$ and $\nabla_yH(x,y) \not=0$ for $y \not = 0$. Then
\[
\dim \ker D_y \phi^X \le n-1,
\]
at points which are not mapped to the ${y=0}$-subspace by the Hamiltonian time-1-map $\phi$.
\end{prop}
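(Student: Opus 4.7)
The plan is to deduce the proposition directly from Theorem \ref{thm:confsymHam} by applying it to the fibre-scaling action. Concretely, I would take the one-dimensional Lie group $G=(\R,+)$, with Lie algebra $\R$ and exponential the identity, acting on $\R^{2n}$ by $\chi_s(x,y)=(x,e^s y)$, and realise the boundary value problem in the form \eqref{eq:DirichletProblem} with $\Lambda=\{x^\ast\}\times \R^n$ and $\Lambda'=\{X^\ast\}\times \R^n$. Given any point $(x^\ast,y)$ at which one wants to bound $\dim \ker D_y \phi^X$, I would simply choose $X^\ast = \phi^X(x^\ast, y)$ so that $(x^\ast, y)$ is a solution of the corresponding separated Lagrangian boundary value problem. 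With $k=\dim G=1$, Theorem \ref{thm:confsymHam} will then yield exactly the bound $\tfrac{1}{2}\dim \R^{2n}-\dim G=n-1$ claimed in the proposition.

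The bulk of the work is the hypothesis check for $V=1$. Assumption (i) is immediate since $\chi_s^\ast\omega_{\mathrm{std}}=e^s \omega_{\mathrm{std}}$, so $\theta(sV)=e^s$. Assumption (ii) follows from the homogeneity identity $H\circ\chi_s=e^{ps}H$, giving $\eta(sV)=e^{ps}$. Assumption (iii), the non-stationarity condition \eqref{eq:nonstationaryassumption}, reduces to $\frac{\d}{\d s}\big|_{s=0}e^{(p-1)s}=p-1\neq 0$, which is precisely what $p\neq 1$ provides. Assumption (iv) is verified by the fundamental vector field computation $V^{\#}_z=(0,y)\in T_z\Lambda=\{0\}\times \R^n$ at $z=(x^\ast,y)$. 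Finally, assumption (v) follows from the hypothesis that $z$ is not mapped to the $\{y=0\}$-subspace: since $\phi^Y(z)\neq 0$ and $\nabla_y H(x,y)\neq 0$ for $y\neq 0$, the Hamiltonian vector field $X_H(z')=(\nabla_y H,-\nabla_x H)$ has non-vanishing first component at $z'=\phi(z)$, so $X_H(z')\notin T_{z'}\Lambda'=\{0\}\times \R^n$.

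Once all five hypotheses are in place, Theorem \ref{thm:confsymHam} immediately gives that the degeneracy at $z$ is at most $n-1$. To translate this degeneracy bound into the kernel-dimension statement of the proposition, I would apply Proposition \ref{prop:kernelDegeneracy} (equivalently, Proposition \ref{prop:kernelDIM} in coordinates), which identifies the degeneracy with $\dim \ker D_y\phi^X$. I do not anticipate any real obstacle; the entire proof is a hypothesis verification, and the only point worth stressing is that the ``not mapped to $\{y=0\}$'' condition in the proposition is exactly what is needed to secure the transversality assumption (v) via the non-vanishing of $\nabla_y H$ off the zero section.
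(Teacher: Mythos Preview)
Your proposal is correct and follows essentially the same route as the paper's first proof: apply Theorem~\ref{thm:confsymHam} to the one-dimensional fibre-scaling action and verify the five hypotheses, with $p\neq 1$ giving non-stationarity and the condition $\nabla_yH\neq 0$ off the zero section giving transversality of $X_H$ to $\Lambda'$. The only differences are cosmetic---you use the additive parametrisation $(\R,+)$ with $\chi_s(x,y)=(x,e^s y)$ where the paper uses the multiplicative group $\R^+$ with $\chi_r(x,y)=(x,ry)$---and you are slightly more explicit than the paper in invoking Proposition~\ref{prop:kernelDIM}/\ref{prop:kernelDegeneracy} to pass from the degeneracy bound to the kernel-dimension bound; the paper also supplies a second, direct coordinate proof that bypasses the theorem entirely.
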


\begin{remark}
The condition $\nabla_yH(x,y) \not=0$ for $y \not = 0$ is equivalent to the condition that for any fixed $x$ the map $y \mapsto H(x,y)$ is not constant on a 1-dimensional subspace of the $y$-coordinate plane. 
For instance, the condition $H(x,y) \not =0$ for $y\not=0$ implies $\nabla_yH(x,y) \not=0$ for $y \not = 0$ for homogeneous Hamiltonians in $y$.
\end{remark}

\begin{proof}[Proof of proposition \ref{prop:DimResthomDirichlet}]
We consider the positive real numbers $\R^+$ as a multiplicative Lie group with action $\chi_r(x,y)=(x,  r y)$, $\theta(r)=r$ and $\eta(r)=r^p$. The action preserves the fibres $\{x\} \times \R^n$ of the cotangent bundle $T^\ast \R^n \cong \R^{2n}$.
The assumption $p \ge 2$ implies that the time-scaling factor $\eta(r)/\theta(r) = r^{p-1}$ is not stationary at the neutral element $1$.
Due to $\nabla_y H(x,y) \not =0$ for $y \not =0$ the Hamiltonian vector field $X_H$ is not tangent to any fibre $\{x\}\times \R^n$ unless $y=0$. Now the assertion follows by theorem \ref{thm:confsymHam}.
\end{proof}

Instead of referring to theorem \ref{thm:confsymHam}, we can also carry out the proof of proposition \ref{prop:DimResthomDirichlet} directly in local coordinates. It corresponds to choosing the vertical polarisation
\[\mathcal D = \mathrm{span} \left\{ \frac{\p}{\p y_1},\ldots,\frac{\p}{\p y_n}\right\},\]
in the theorem's proof.

\begin{proof}[Direct proof of proposition \ref{prop:DimResthomDirichlet}]
Let $(a,b) \in \R^n\times\R^n \cong \R^{2n}$ with $b \not=0$. The map $y\mapsto \phi^X(a,y)$ has an $n$-dimensional domain space. We prove the claim by showing that $b \in \R^n \cong T_{b}\R^n$ is not element of the kernel $\ker D_y \phi^X(a,b)$; the assertion then follows by proposition \ref{prop:kernelDIM}.
\begin{align*}
\D \phi^X(a,b) \begin{pmatrix}
0\\b
\end{pmatrix}
&=\lim_{\e \to 0} \frac 1 \e \left( (x\circ\phi_1)(a,(1+\e)b) - (x\circ\phi_1)(a,b) \right)\\
&\stackrel{(\ast)}{=}\lim_{\e \to 0} \frac 1 \e \left( (x\circ\phi_{(1+\e)^{p-1}})(a,b) - (x\circ\phi_1)(a,b) \right)\\
&=\left.\frac{\d}{\d t}\right|_{t=1}(x\circ\phi_{t^{p-1}})(a,b) \\
&= {(p-1)}\d x(X_H(\phi_1(a,b)))\\
&= {(p-1)}\d x(X_H((x\circ\phi)(a,b),(y\circ\phi)(a,b)))\\
&={(p-1)}\nabla_yH ((x\circ\phi)(a,b),(y\circ\phi)(a,b))\\
&\not=0,
\end{align*}
where we have used lemma \ref{lem:homtime} to obtain the equality $(\ast)$.
\end{proof}

\begin{example}
Proposition \ref{prop:DimResthomDirichlet} applies to Hamiltonian systems with Hamiltonians of the form
\begin{equation}\label{eq:HamM}
H(x,y) = \frac 12 y^T A(x)y \quad \text{with} \quad A(x) \in \mathrm{GL}(n,\R) \text{ for all } x,
\end{equation}
where $\mathrm{GL}(n,\R)$ denotes the group of invertible matrices. (The restriction to invertible matrices forces $\nabla_y H (x,y)=0 \implies y=0$ as required in proposition \ref{prop:DimResthomDirichlet}.) 
\end{example}

\begin{remark}
Recall that by the Darboux-Weinstein theorem \cite[Thm.\ 7.1]{WEINSTEIN1971329} polarisations transversal to a Lagrangian submanifold $\Lambda$ give rise to symplectomorphisms from local neighbourhoods of $\Lambda$ to neighbourhoods of the zero section in the cotangent bundle $T^\ast \Lambda$ {such that} the polarisation is carried to the vertical polarisation in $T^\ast \Lambda$. The canonical coordinates on the cotangent bundle $T^\ast \Lambda$ can be used to induce local symplectic coordinates on neighbourhoods of the submanifold $\Lambda$ in the ambient symplectic manifold. This shows that the statement of \ref{prop:DimResthomDirichlet} applies to Hamiltonians which are homogeneous in the direction of any polarisation tangential to the boundary condition.
\end{remark}

\begin{example}\label{ex:multiscalingsym}
To linearly independent $a^{(s)}=(a^{(s)}_1,\ldots,a^{(s)}_n) \in \R^n$ with $1\le s \le k \le n$ consider the action of $(\R^+)^k=(0,\infty)^k$ on $(\R^{2n},\omega_{\mathrm{std}},H)$ by the conformal symplectic transformations $\chi^{(s)}_\lambda$ with
\begin{align*}
x^j &\mapsto \lambda^{a^{(s)}_j} x^j\\
y_j &\mapsto \lambda^{c-a^{(s)}_j} y_j.
\end{align*}
Assume that $H\circ  \chi^{(s)}_\lambda = \lambda^p \cdot H$ for all $s \le k$. Without loss of generality, $c=1$. Assume that $p\not=1$. 
Since scaling symmetries commute, the Lie group action is well-defined.
Moreover, the action fulfils the non-stationary assumption \eqref{eq:nonstationaryassumption} since $p\not=1$ and the $a^{(s)}$ are linearly independent.

If $H$ is a polynomial and separated, i.e.\ its Hessian is of block-diagonal form, then $H$ is of the form
\[
H(x,y) = 
\sum_{b=1}^{m_1} \alpha_b \prod_{j=1}^{l_b^{(1)}} x^{\sigma^{(1)}_b(j)}
+\sum_{b=1}^{m_2} \beta^b \prod_{j=1}^{l_b^{(2)}} y_{\sigma^{(2)}_b(j)}
\]
with $m_1,m_2\in \N_0$, $l_b^{(1)},l_b^{(2)}\in \N_{>0}$, $\{\alpha_b\}_1^{m_1} ,\{\beta_b\}_1^{m_2} \subset \R\setminus\{0\}$ and maps
\[
\sigma^{(\tau)}_{b_\tau} \colon \{1,\ldots,l_{b_\tau}^{(\tau)}\} \to \{1,\ldots,n\} \quad \text{for } 1\le {b_\tau}\le m_\tau, \; \tau \in \{1,2\}
\]
such that for all $1\le s \le k$
\[
\forall 1\le b\le m_1 :
\sum_{j=1}^{l_b^{(1)}}a^{(s)}_{\sigma^{(1)}_b(j)} = p
\quad \text{and}\quad
\forall 1\le b\le m_2 :
\sum_{j=1}^{l_b^{(2)}}(1-a^{(s)}_{\sigma^{(2)}_b(j)})=p.
\]
Let $\Lambda$ be a Lagrangian submanifold which is tangent to the vector field
\[
\sum_{j=1}^n a^{(s)}_j x^j \frac{\p}{\p x^j} + (1-a^{(s)}_j)y_j \frac{\p}{\p y_j}
\]
at $z$ for all $s \le k$ and let $\Lambda'$ be any Lagrangian manifold transversal to the Hamiltonian vector field $X_H$.
By theorem \ref{thm:confsymHam}, the maximal degeneracy of a singularity occurring in the Lagrangian boundary value problem $(\phi,\Lambda,\Lambda')$ for the time-1-map $\phi$ of the Hamiltonian flow at $z$ is $n-k$. 
\end{example}

The following reformulation of theorem \ref{thm:confsymHam} can be handy when considering conformal actions which naturally decompose into a conformal and symplectic action.

\begin{corollary}\label{corr:confsymHamDecompose}
Let $(\phi, \Lambda,\Lambda')$ be a separated Lagrangian boundary value problem localised around a solution $z \in \Lambda\subset(M,\omega) $, $z' = \phi(z)\in \Lambda'\subset  (M',\omega')$ in an ambient Hamiltonian system $(\tilde M,\omega,H)$ with time-1-map $\phi$ such that $X_H(z') \not \in T_{z'}\Lambda'$.
Let $G$ be a Lie group acting conformally on $M$ and $M'$ by $g \mapsto \chi_g$ and symplectically by $g \mapsto \Psi_g$, where the actions are defined locally around a neighbourhood $U$ of the neutral element $e \in G$ and the neighbourhood $\mathfrak U = \exp^{-1}(U)$ of $0$ in the Lie algebra of $G$. Moreover, assume that for all $V\in \mathfrak U$ the fundamental vector fields $V^\#$ and $V^{\#\#}$ corresponding to the infinitesimal actions with $V$ fulfil
\[
V_z^\# + V_z^{\#\#} \in T_z \Lambda.
\]
Define $\theta \colon \mathfrak U \to \R$ by $\chi_{\exp(V)}^\ast \omega = \theta(V) \cdot \omega$ and assume there exists $\eta \colon \mathfrak U \to \R$ fulfilling the non-stationary assumption \eqref{eq:nonstationaryassumption} such that for all $V\in \mathfrak U$
\[ H \circ \chi_{\exp(V)} \circ \Psi_{\exp(V)} = \eta(V) \cdot H.\]
Then the degeneracy of a singularity of the Lagrangian boundary value problem at $z$ is at most $\frac 12 \dim M - \dim G$.
\end{corollary}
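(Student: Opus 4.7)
The plan is to mirror the proof of Theorem \ref{thm:confsymHam}, replacing the single conformal symplectic action $\chi_{\exp V}$ by the composed family $\Phi_V := \chi_{\exp V}\circ \Psi_{\exp V}$. Although $\Phi$ is in general not a $G$-action (that would require $\chi$ and $\Psi$ to commute), the earlier proof is essentially infinitesimal along the one-parameter families $s\mapsto \Phi_{sV}$, and so its structure carries over.

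First I would verify that each $\Phi_V$ is conformal symplectic with factor $\theta(V)$, which is immediate from $\Psi$ being symplectic:
\[\Phi_V^*\omega = \Psi_{\exp V}^*\chi_{\exp V}^*\omega = \Psi_{\exp V}^*(\theta(V)\cdot \omega)= \theta(V)\cdot\omega.\]
Together with $H\circ \Phi_V = \eta(V)\cdot H$, Lemma \ref{lem:homtime} applies individually to each $\Phi_V$ and gives $\Phi_V \circ \phi_{\eta(V)/\theta(V)\cdot t} = \phi_t \circ \Phi_V$. A chain-rule computation identifies the infinitesimal generator of $s\mapsto \Phi_{sV}$ at $z$ with $V^\#_z+V^{\#\#}_z$, which lies in $T_z\Lambda$ by hypothesis; an argument analogous to Remark \ref{rem:foliation} yields that $\operatorname{span}\{V^\#_z+V^{\#\#}_z : V\in \mathfrak g\}$ is $k$-dimensional. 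Following Step 1 of the theorem's proof, I would then construct on a neighbourhood of $z'$ an integrable $n$-dimensional distribution $\mathcal D$ with $\mathcal D_{z'}=T_{z'}\Lambda'$ that contains $V^\#_q + V^{\#\#}_q$ for all $V\in\mathfrak g$ and $q$ near $z'$. Repeating Step 2 with $\chi$ replaced by $\Phi$, one obtains
\begin{align*}
\d \left.\left[\phi_{|\Lambda}\right]\right|_z(V^\#_z + V^{\#\#}_z)
&= \left.\tfrac{\d}{\d s}\right|_{s=0}\left[(\phi \circ \Phi_{sV})(z)\right]
= \left.\tfrac{\d}{\d s}\right|_{s=0}\left[(\Phi_{sV} \circ \phi_{\eta(sV)/\theta(sV)})(z)\right] \\
&= \left[ \left.\tfrac{\d}{\d s}\right|_{s=0}\!\left(\eta(sV)/\theta(sV)\right)\cdot X_H(z')\right] \;\neq\; 0,
\end{align*}
using first-order invariance of $\mathcal D$ under $\Phi_{sV}$ (equivalently, pointwise containment of $V^\#+V^{\#\#}$ in $\mathcal D$), the non-stationary assumption, and $X_H(z')\notin \mathcal D_{z'}=T_{z'}\Lambda'$. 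Hence the $k$-dimensional subspace $\operatorname{span}\{V^\#_z+V^{\#\#}_z\}$ meets $\ker \d\left.\left[\phi_{|\Lambda}\right]\right|_z$ trivially, so the kernel has dimension at most $n-k$, and Proposition \ref{prop:kernelDegeneracy} delivers the bound.

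The main obstacle I anticipate is the construction of $\mathcal D$: because $\Phi$ is not a group action, the span of $\{V^\#+V^{\#\#}\}$ need not be involutive, so one cannot simply invoke Frobenius on ``orbits'' as in Theorem \ref{thm:confsymHam}. What rescues the argument is that only first-order invariance of $\mathcal D$ along each family $s\mapsto \Phi_{sV}$ is used in the final computation, and this reduces to pointwise containment of $V^\#+V^{\#\#}$ in $\mathcal D$. One can therefore enlarge the (possibly non-involutive) pointwise span of the combined fundamental vector fields to an integrable $n$-dimensional distribution by supplementing with transversal directions, mirroring the transversal construction in the theorem's proof.
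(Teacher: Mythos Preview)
The paper gives no proof of this corollary at all; it merely introduces it as ``a reformulation of theorem \ref{thm:confsymHam}'' and moves on to an example. The intended argument is exactly what you outline: replace the single conformal action $\chi_{\exp V}$ in the theorem's proof by the composite $\Phi_V=\chi_{\exp V}\circ\Psi_{\exp V}$, observe that $\Phi_V$ is still conformal symplectic with factor $\theta(V)$ and satisfies $H\circ\Phi_V=\eta(V)\,H$, so that Lemma \ref{lem:homtime} and the Step~2 computation go through with fundamental vector $V^\#+V^{\#\#}$ in place of $V^\#$.

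Your proposal is therefore in line with the paper's approach, and in fact more careful: you explicitly flag that $V\mapsto\Phi_V$ need not be a group action (so the orbit-foliation used in Step~1 is not automatically available) and note that the Step~2 calculation really only uses that the combined fundamental vector field lies pointwise in the distribution $\mathcal D$, rather than full $G$-invariance of the leaves. The paper's one-line ``reformulation'' tacitly assumes the reader will make this observation; you have made it explicit. One residual subtlety worth recording is that, just as in the theorem's proof, the construction of $\mathcal D$ with $\mathcal D_{z'}=T_{z'}\Lambda'$ \emph{and} $(V^\#+V^{\#\#})_{z'}\in\mathcal D_{z'}$ implicitly requires the combined action to be tangent to $\Lambda'$ at $z'$ as well; this is inherited from the theorem and is not special to your argument.
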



\begin{example}
The scaling symmetries in example \ref{ex:multiscalingsym} naturally decompose into symplectic scaling transformations $\Psi_\lambda^{(a)}$ with 
\begin{align*}
x^j &\mapsto \lambda^{a_j} x^j\\
y_j &\mapsto \lambda^{-a_j} y_j
\end{align*}
and the conformal symplectic transformation $\chi_\lambda(x,y)=(x,\lambda y)$ considered in example \ref{ex:thetachi}.
\end{example}

\subsection{Geodesic bifurcations}\label{subsub:applgeod}

\subsubsection{Application of theorem \ref{thm:confsymHam}}
Let us use the results to analyse which singularities occur in conjugate loci on smooth Riemannian manifolds. As mentioned in the introductory example in section \ref{sec:introductionsection}, the motions of the Hamiltonian system $(T^\ast N,\omega,H)$ with
\begin{equation}\label{eq:HamiltonianGeodesic}
H(\alpha) = \frac 12 \alpha\left(g^{-1}(\alpha)\right)
\end{equation}
and standard symplectic structure $\omega$ correspond to the velocity vector fields of geodesics on the smooth Riemannian manifold $(N,g)$. Finding a geodesic which connects two points $x,X\in N$ corresponds to solving the boundary value problem 
\begin{equation}\label{eq:bdproblHGeodesicx}
\pi (\alpha) =x, \quad (\pi \circ \Phi)(\alpha)=X,
\end{equation}
where $\Phi$ is the time-1-map of the Hamiltonian flow on $T^\ast N$. 
Together with the observations in example \ref{ex:HamInvuptoscaling}, theorem \ref{thm:confsymHam} reproves the following classical fact \cite[Ch.5]{doCarmo}.
\begin{prop}\label{prop:kernelExpMap}
The {dimension of the} kernel of the geodesic exponential map on a Riemannian manifold {is strictly less than the dimension of the manifold}.
\end{prop}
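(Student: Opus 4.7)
The plan is to apply Theorem \ref{thm:confsymHam} to the geodesic Hamiltonian system with the one-parameter scaling group from Example \ref{ex:HamInvuptoscaling}. Fix $x\in N$ and a tangent vector $v\in T_xN$; set $\alpha=g(v)\in T_x^\ast N$, $z'=\Phi(\alpha)$ and $X=\pi(z')$. The conjugate-points problem is the separated Lagrangian boundary value problem $(\Phi,\Lambda,\Lambda')$ localised around $\alpha$, where $\Lambda=T_x^\ast N$ and $\Lambda'=T_X^\ast N$ are the cotangent fibres (which are Lagrangian). Under the bundle isomorphism $g\colon T_xN\to T_x^\ast N$ the composition $\pi\circ\Phi|_{T_x^\ast N}$ is exactly the geodesic exponential map $\exp_x$, so Proposition \ref{prop:kernelDegeneracy} will identify the degeneracy at $\alpha$ with the dimension of $\ker d(\exp_x)_v$.

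Next I would verify the five hypotheses of Theorem \ref{thm:confsymHam} with $G=\R^+=(0,\infty)$ and $\chi_\theta(\beta)=\theta\beta$: (i)–(iii) are done in Example \ref{ex:HamInvuptoscaling}, where $\theta(\theta)=\theta$, $\eta(\theta)=\theta^2$, so $\eta(\theta)/\theta(\theta)=\theta$ is non-stationary at $\theta=1$. For (iv) the fundamental vector field of the scaling action at $\alpha$ is the Euler vector field along the fibre $T_x^\ast N$, hence lies in $T_\alpha\Lambda$. For (v), assuming $v\neq 0$, the Hamiltonian flow line is the cotangent lift of a geodesic with nonzero velocity, so $d\pi(X_H(z'))\neq 0$, which means $X_H(z')$ is not vertical and therefore $X_H(z')\notin T_{z'}\Lambda'=T_{z'}(T_X^\ast N)$. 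Thus Theorem \ref{thm:confsymHam} yields degeneracy at most $\tfrac12\dim T^\ast N-\dim\R^+=n-1$.

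Combining these two steps gives $\dim\ker d(\exp_x)_v \le n-1<n=\dim N$ for every $v\neq 0$. The remaining case $v=0$ is trivial because $d(\exp_x)_0=\mathrm{id}_{T_xN}$, so the kernel is zero. This establishes the proposition.

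The only step that requires genuine care is hypothesis (v): one must justify that along a geodesic with $v\neq 0$ the lifted flow line is nowhere tangent to a cotangent fibre. This follows because the projection $d\pi$ sends $X_H(\beta)$ to $g^{-1}(\beta)\in T_{\pi(\beta)}N$, which is nonzero whenever $\beta\neq 0$, and $\beta=0$ is preserved by the geodesic flow so $z'\neq 0$. Everything else is bookkeeping inside the framework already built in Sections \ref{sec:DirichletExtra}–\ref{sec:ObstrConfSym}.
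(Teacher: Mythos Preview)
Your proposal is correct and follows essentially the same route as the paper: Proposition~\ref{prop:kernelExpMap} is deduced from Theorem~\ref{thm:confsymHam} applied to the scaling action of Example~\ref{ex:HamInvuptoscaling} on the geodesic Hamiltonian, with the cotangent fibres as boundary Lagrangians. The paper is terse here (it also remarks that Proposition~\ref{prop:DimResthomDirichlet} gives an equivalent coordinate version), whereas you spell out the verification of hypotheses (iv)--(v) and the trivial case $v=0$; these details are accurate and match what the framework requires.
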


As well as applying theorem \ref{thm:confsymHam} directly to conclude proposition \ref{prop:kernelExpMap}, one can also use proposition \ref{prop:DimResthomDirichlet} because $H$ takes the form \eqref{eq:HamM} when expressed in canonical coordinates where $A$ is the matrix representation of $g^{-1}$ in the corresponding frame.

Recall that if $\gamma$ is a geodesic starting at time $t=0$ at $x$ with $\dot \gamma(0) = y$ then $x$ and $\gamma(1)$ are conjugate points if and only if the kernel of the exponential map evaluated at $y \in T_xN$ is non-trivial. The dimension of the kernel corresponds to the multiplicity of the conjugate points.
Thus, proposition \ref{prop:kernelExpMap} implies


\begin{prop}\label{cor:multconjpts}
The multiplicity of conjugate points on an $n$-dimensional Riemannian manifold $N$ cannot exceed $n-1$. {In other words, at points in a conjugate locus with respect to some fixed point in $N$ the degeneracy of a singularity is at most $n-1$.}
\end{prop}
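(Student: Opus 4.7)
The plan is to reduce the statement to Proposition \ref{prop:kernelExpMap} via the standard dictionary between conjugate points and critical points of the geodesic exponential map. Since Proposition \ref{prop:kernelExpMap} (which itself is the translation of Proposition \ref{prop:DimResthomDirichlet} to the geodesic Hamiltonian \eqref{eq:HamiltonianGeodesic}) has already been established, the remaining work is essentially bookkeeping.

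First, I would recall the alternative definition of conjugate points mentioned in the introduction: two points $x, X \in N$ joined by a geodesic $\gamma$ with $\dot\gamma(0)=v \in T_xN$ and $\gamma(1)=X$ are conjugate precisely when $v$ is a critical point of the exponential map $\exp_x \colon T_xN \to N$, and the multiplicity of the conjugate pair equals $\dim \ker d(\exp_x)|_v$. This follows from the standard identification of Jacobi fields along $\gamma$ vanishing at $x$ with variations of $\gamma$ through geodesics emanating from $x$; those which additionally vanish at $X$ correspond to vectors in $\ker d(\exp_x)|_v$.

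Next, I would make explicit the link between $\exp_x$ and the Hamiltonian boundary value problem \eqref{eq:bdproblHGeodesicx}. Under the Legendre transform $g \colon T_xN \to T^\ast_xN$ induced by the metric, we have
\[
\exp_x(v) \;=\; (\pi \circ \Phi)\bigl(g(v)\bigr),
\]
so $\dim \ker d(\exp_x)|_v = \dim \ker d(\pi\circ\Phi)|_{g(v)}$ restricted to the fibre $T^\ast_xN$. The right-hand side is precisely the quantity bounded by Proposition \ref{prop:kernelExpMap}, which asserts that this dimension is strictly less than $n=\dim N$.

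Combining these two observations yields that the multiplicity is at most $n-1$. The second sentence of the proposition, phrased in terms of degeneracy of a singularity of the separated Lagrangian boundary value problem, follows by Proposition \ref{prop:kernelDegeneracy} (or equivalently Proposition \ref{prop:kernelDIM}) applied to the boundary value problem $(\Phi, \Lambda, \Lambda')$ with $\Lambda = T^\ast_xN$ and $\Lambda' = T^\ast_XN$: degeneracy of the singularity equals $\dim \ker d(\exp_x)|_v$, and the same bound applies. No genuine obstacle arises here — the main content is already packaged in Proposition \ref{prop:kernelExpMap}, and this corollary simply reinterprets that bound in the classical language of multiplicities of conjugate points.
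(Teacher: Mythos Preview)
Your proposal is correct and follows essentially the same route as the paper: the paper simply recalls that the multiplicity of a conjugate pair equals the dimension of the kernel of the exponential map and then invokes Proposition~\ref{prop:kernelExpMap}, which is exactly what you do (with slightly more detail on the Legendre-transform identification and the link to Proposition~\ref{prop:kernelDegeneracy} for the ``degeneracy'' phrasing). There is nothing to add or correct.
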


\begin{remark}
The space of Jacobi fields along a geodesic $\gamma$ vanishing at $\gamma(0)$ corresponds to the space $T_{\gamma(0)}N$ via $J \mapsto \frac{\nabla}{\d t} J(0)$, where $\frac{\nabla}{\d t}$ denotes the covariant derivative along $\gamma$ with respect to the Levi Civita connection on $(N,g)$. Classically, corollary \ref{cor:multconjpts} is proved by noting that the Jacobi field $t \mapsto t \dot \gamma(t)$ along the curve $\gamma$ does not vanish at $\gamma(1)$ \cite[Ch.5]{doCarmo}.
\end{remark}

In the introductory example in section \ref{sec:introductionsection} we saw isolated cusp singularities connected by lines of fold singularities in the conjugate locus of a 2-dimensional Gaussian and in the conjugate locus of a 2-dimensional ellipsoid embedded in $\R^3$ (figure \ref{fig:EllipsoidConjPts}). The singularities persist under small perturbations of the {metric or the reference point}. The occurrence of fold and cusp singularities as generic singularities in conjugate loci on surfaces is not a coincidence.
{Consider the conjugate locus to a fixed point $x$ on an $n$-dimensional Riemannian manifold $(N,g)$.
The Hamiltonian boundary value problem \eqref{eq:bdproblHGeodesicx} has $n$ parameters given by the position of the endpoint $X$.}


Assuming that considering the position of the endpoint $X$ as parameters of the boundary value problem  \eqref{eq:bdproblHGeodesicx} yields a versal unfolding of the corresponding critical-points-of-a-function problem (observation \ref{obs:transl}) and that no other obstructions than the one described by Theorem \ref{thm:confsymHam} apply (cf. \cite[p.362]{Janeczko1995}), those singularities occur generically, i.e. unremovably under small perturbations of the metric or the reference point, which occur generically in the critical-points problem for families with $n$ parameters in at most $n-1$ variables.
By the classification results in \cite{Arnold2012} this means that in conjugate loci on Riemannian surfaces fold ($A_2$) and cusp ($A_3$) singularities occur generically. Cusp singularities occur at isolated points and fold singularities in 1-parameter families.

If we allow the reference point $x$ to vary as well on the manifold, then even more singularities can become generic phenomena because the corresponding Hamiltonian boundary value problem has $2n$ parameters, namely the reference point $x$ and the endpoint $X$. However, the degeneracy of the singularities is still bounded by $n-1$. 
Again, assuming that considering $x$ and $X$ as parameters of the corresponding Hamiltonian boundary value problem \eqref{eq:bdproblHGeodesicx} yields a versal unfolding of the problem, those singularities occur generically which also occur generically in the critical-points problem for families with $2n$ parameters in at most $n-1$ variables. On a Riemannian surface these are fold ($A_2$), cusp ($A_3$), swallow tail ($A_4$) and butterfly ($A_5$) singularities, where the singularities $A_j$ occur generically in $5-j$-parameter families.

This recovers results from \cite{WATERS20171} where the creation and annihilation of cusps in the conjugate locus on a surface is analysed by studying Jacobi's equations.

\subsubsection{Further remarks on bifurcations in the conjugate-point problem}\label{subsubsec:furtherRemarks}
There is a variety of aspects to the conjugate-points problem on a Riemannian manifold.
{Fixing the reference point, the exponential map can be viewed as a Lagrangian map \cite{Janeczko1995} for which generic singularities are classified \cite[Part III]{Arnold2012}. Moreover, under non-degeneracy assumptions the conjugate locus of an exponential map with respect to a fixed reference point can locally be identified with families of skew-symmetric matrices \cite{CutLocusWeinstein}.}
Illustrative and related to the presented viewpoint is reference \cite{WATERS20171} which includes an analysis how the conjugate locus as a whole can bifurcate as the reference point moves on a surface.
A functional analytic approach to the geodesic bifurcation problem (in an extended sense) can be found in \cite{Piccione2004}.
Calculating geodesics on submanifolds of the euclidean space is often motivated by the task of finding distance minimising curves between two points. Several methods are presented in \cite{GeodesicCourseWork}. An approach using geodesics as homotopy curves can be found in \cite{Thielhelm2015}. Moreover, the authors examine implications of the symplectic structure in the problem for numerical computations in \cite{numericalPaper}: {a correctly captured elliptic umbilic bifurcation ($D_4^{-}$) in the conjugate locus of a perturbed 3-dimensional ellipsoid can be seen in figure \ref{fig:RATTLEElliptic}. Three lines of cusp bifurcations ($A_3$) merge in an elliptic umbilic point.
If the symplectic structure was ignored in the discretisation of the problem, the lines of cusps would fail to merge and no elliptic umbilic point would be visible.}

\begin{figure}\begin{center}
\includegraphics[width=0.6\textwidth]{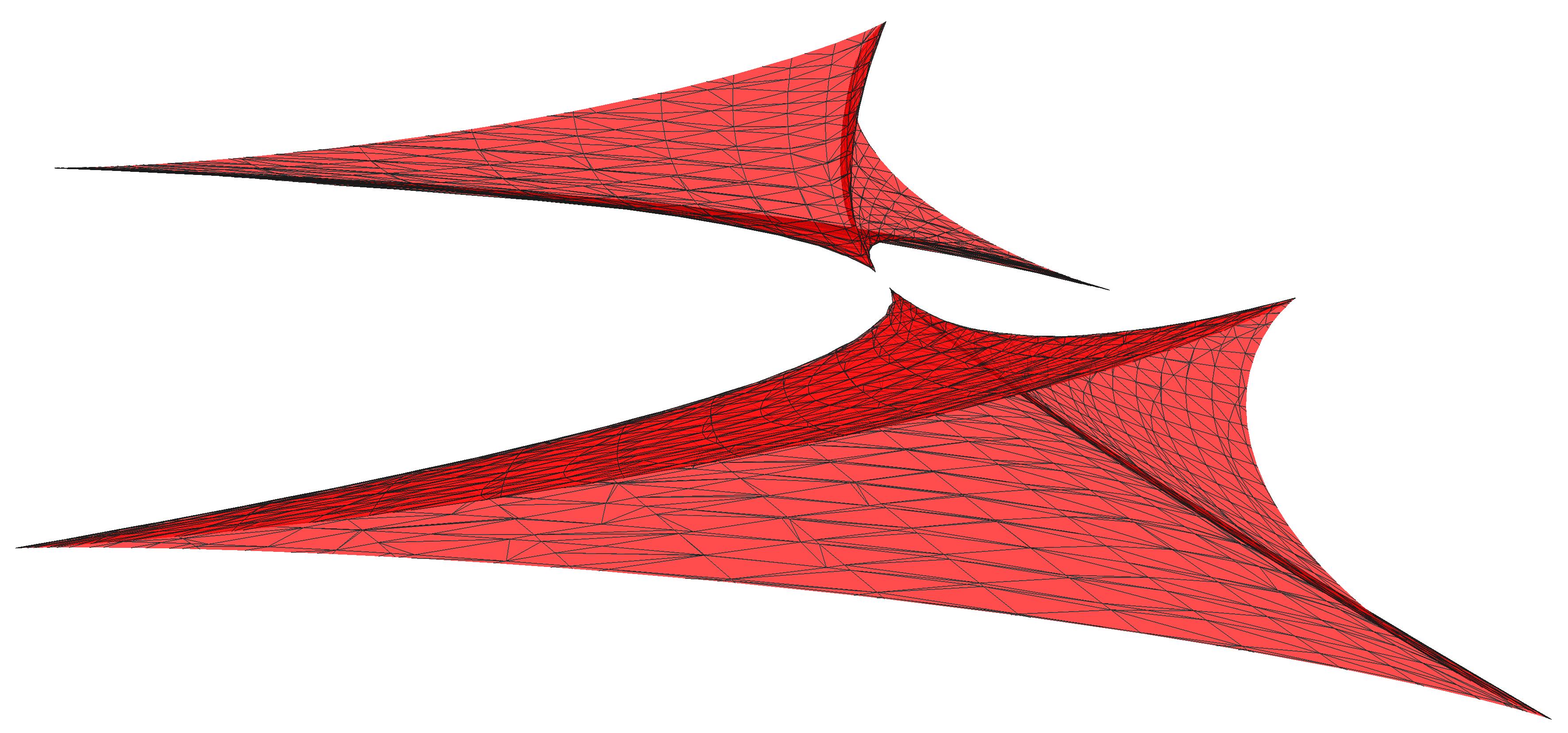}
\end{center}
\caption{Elliptic umbilic bifurcation $D_4^-$ in the conjugate-points-problem on a perturbed 3-dimensional ellipsoid embedded in $\R^4$. Three lines of cusps merge to an elliptic umbilic singularity. To capture this bifurcation numerically, preserving the symplectic structure of the problem when discretising is essential. (See \cite{numericalPaper} for details.)}\label{fig:RATTLEElliptic}
\end{figure}

\section*{\qquad \qquad \qquad \qquad \qquad \quad \; Acknowledgements}

We thank Peter Donelan, Bernd Krauskopf, Hinke Osinga and Gemma Mason for useful discussions.
This research was supported by the Marsden Fund of the Royal Society Te Ap\={a}rangi.

\bibliographystyle{spmpsci}  
\bibliography{resources}

\smallskip
\smallskip
\smallskip
\smallskip
\smallskip
\smallskip
\smallskip

\begin{minipage}{0.31\textwidth}
\begin{flushleft}
\begin{footnotesize}
Robert I McLachlan\\ Institute of Fundamental Sciences\\ Massey University\\ Palmerston North \\ New Zealand\\ 
+64 6 951 7652\\
r.mclachlan@massey.ac.nz
\end{footnotesize}
\end{flushleft}
\end{minipage}
\noindent\begin{minipage}{0.31\textwidth}
\begin{flushleft}
\begin{footnotesize}
Christian Offen\\ Institute of Fundamental Sciences\\ Massey University\\ Palmerston North \\ New Zealand\\
+64 6 951 8707\\
c.offen@massey.ac.nz
\end{footnotesize}
\end{flushleft} 
\end{minipage}
\end{document}